\newtheorem{theorem}{Theorem}
\theoremstyle{plain}
\newtheorem{lemma}[theorem]{Lemma}
\newtheorem{proposition}[theorem]{Proposition}
\newtheorem{remark}[theorem]{Remark}
\numberwithin{equation}{section}
\numberwithin{theorem}{section}
\newcommand{\cU}{\mathcal{U}}
\newcommand{\cE}{\mathcal{E}}
\newcommand{\cL}{\mathcal{L}}
\newcommand{\cJ}{\mathcal{J}}
\newcommand{\cF}{\mathcal{F}}
\newcommand{\E}{\mathbb{E}}
\newcommand{\R}{\mathbb{R}}
\newcommand{\F}{\mathbb{F}}
\newcommand{\bP}{\mathbb{P}} 
\def\wt{\widetilde}
\def\Ito{It\^{o} }
\begin{document}
\title
{Impulse approximate controllability for stochastic evolution equations and its applications}

\author{\sffamily Yuanhang Liu$^{1,^*}$   \\
	{\sffamily\small $^1$ School of Mathematics and Statistics, Central South University, Changsha 410083, China. }
}
	\footnotetext[1]{Corresponding author: liuyuanhang97@163.com}


\keywords{Impulse approximate controllability, stochastic equation, norm optimal control, time optimal control}
\subjclass[2020]{49J20,60H15,93E20}

\maketitle

\begin{abstract}
This paper is concerned with impulse approximate controllability for stochastic evolution equations with impulse controls. As direct applications, we formulate captivating minimal norm and time optimal control problems; The minimal norm problem seeks to identify an optimal impulse control characterized by the minimum norm among all feasible controls, guiding the system's solutions from an initial state within a fixed time interval toward a predetermined target while the minimal time problem is to find an optimal impulse control (among certain control constraint set), which steers the solution of the stochastic equation from a given initial state to a given target set as soon as possible.  These problems, to the best of our knowledge, are among the first to discuss in the stochastic case.
\end{abstract}

\pagestyle{myheadings}
\thispagestyle{plain}
\markboth{Impulse approximate controllability and its applications}{YUANHANG LIU}

\section{Introduction}
Impulse control, situated within a significant realm of control theory, boasts extensive applications (refer, for instance, to \cite{bellman1971topics,bensoussan1975controle,tsien1954engineering,yang2001impulsive}). In numerous scenarios, impulse control emerges as a compelling alternative for addressing systems resistant to continuous control inputs. Consider, for example, the control of a bacterial population; employing impulse control allows for instantaneous changes in bactericide density. In contrast, continuous control mechanisms risk bolstering bacterial drug resistance (cf. \cite{trelat2016impulse,yang2001impulsive}). In \cite{yang2001impulsive}, the author systematically elucidates the theory and applications of impulse controlled ordinary differential equations. The exposition not only delves into the foundational principles but also provides insights into the practical implementation of impulse controls. The realm of optimal control and controllability for impulse controlled equations has garnered considerable attention in the literature. In \cite{duan2020minimal}, the authors delved into a specific minimal norm optimal control problem concerning a semilinear heat equation integrated with impulse controls. This problem involves identifying an impulse control solution characterized by the minimal norm, directing the controlled system from an initial state to a predefined target within a fixed time interval. In \cite{duan2018minimal}, a different perspective emerged, focusing on a minimal time control problem applied to a linear evolution equation incorporating impulse controls. This problem revolves around two crucial parameters: the upper limit of control constraints and the timing of impulse occurrences. The objective here is to identify an optimal impulse control, within a specified control constraint set, that swiftly guides the evolution equation's solution from an initial state towards a predefined target set. Additionally, in \cite{yong1992necessary}, the authors investigated optimal control scenarios for semilinear evolutionary distributed parameter systems featuring impulse controls. Their study derived necessary optimal control conditions, extending the conventional Pontryagin's maximum principle. For further exploration in the domain of impulse control problems, see \cite{qishu2016periodic,phung2017impulse,buffe2018spectral} and references therein.

Note that the previous research mentioned focused on deterministic equations. However, in practice, the consideration of stochastic effects requires replacing deterministic functions with stochastic processes as mathematical descriptions, leading to the formation of stochastic equations. In \cite{belak2017general}, the authors  established the existence of optimal controls within the framework of a comprehensive stochastic impulse control problem. The key insight lies in characterizing the value function as the pointwise minimum among a set of superharmonic functions. Furthermore, it emerges as the unique continuous viscosity solution to quasi-variational inequalities and as the limit of a sequence of iterated optimal stopping problems. Employing a synthesis of these characterizations, they construct optimal controls, avoiding reliance on any regularity of the value function beyond its continuity. In \cite{basei2022nonzero}, a broader examination unfolds, encompassing a general class of nonzero-sum N-player stochastic games featuring impulse controls, where players exert discrete interventions to manipulate the underlying dynamics. Employing a verification approach, the authors present sufficient conditions governing the Nash equilibria of the game. Notably, their study includes a detailed numerical analysis spanning scenarios such as the single-player case, the two-player game, and the mean-field game with impulse controls. This comprehensive exploration unveils the profound impact of competition on the optimal strategy of players, accompanied by a sensitivity analysis of the model parameters. In \cite{korn2017stochastic}, the authors delved into the intricacies of optimal product management, particularly in the context of multiple product generations evolving continuously over time. This investigation naturally gave rise to the exploration of dynamic optimal control problems, characterized by the inclusion of intervention costs and partially controlled regime shifts. Consequently, the study focuses on the analysis and resolution of stochastic impulse control problems featuring regime-switching within a broad and general framework. For further exploration in the domain of impulse control problems for stochastic equations, see \cite{aid2020nonzero,el2020zero,el2020stochastic} and references therein.

Thus far, there has been no research on the impulse approximate controllability for controlled stochastic systems. This paper presents the first attempt to address this problem for stochastic systems. As direct applications, we formulate captivating minimal norm and time optimal control problems; The minimal norm problem seeks to identify an optimal impulse control characterized by the minimum norm among all feasible controls, guiding the system's solutions from an initial state within a fixed time interval toward a predetermined target while the minimal time problem is to find an optimal impulse control (among certain control constraint
set), which steers the solution of the stochastic equation from a given initial state to a
given target set as soon as possible. Compared to the deterministic case, stochastic terms arise when studying the impulse approximate controllability, which complicates further study of the norm optimal  control problem. To overcome this difficulty, we adopt relevant techniques from \cite{lv1,liu2023observability} to establish the the impulse approximate controllability and to study the minimal norm control problem. On the other hand, it is important to recognize that we cannot employ the time change technique and treat the backward and forward equations in the same manner as in the deterministic case. This is because the stochastic system requires adaptedness, which cannot be disregarded in calculations. The adaptedness of stochastic processes has emerged as a crucial hindrance in exploring the impulse approximate controllability. Notably, the solutions of the forward and backward deterministic parabolic equations are
equivalent to show the impulse approximate controllability. However, for stochastic equations, the solutions of these equations are not
equivalent. This is the main difficulty that we shall overcome in this paper.

The rest of the paper is organized as follows. In Section 2, the main problem is formulated and the main results Theorem \ref{thm:main}, Theorem \ref{thm:main2} and Theorem \ref{impulse-4} are stated. In Section 3, some auxiliary results to be used later are presented. The proof of Theorem \ref{thm:main} is provided in Section 4. In Section 5 and 6, we delve into the discussions of forthcoming norm and time optimal control problems
(to be formulated later) and provide the proofs for Theorem \ref{thm:main2} and Theorem \ref{impulse-4}, respectively. Finally, in Section 7, we show some specific evolution equations, which are covered by the abstract framework of this paper.
\section{Problem formulation and main results}
At first, let us introduce necessary notations.

Let $(\Omega,\mathcal{F},\left\{\mathcal{F}_t\right\}_{t\geq 0}, \bP)$ be a fixed complete filtered probability space, on which a one dimensional standard Brownian motion $\{W(t)\}_{t\geq0}$ is defined, and $\left\{\mathcal{F}_t\right\}_{t\geq 0}$ is the corresponding natural filtration, augmented by all the $\bP$-null sets in $\mathcal{F}$. We denote by $\mathbb{F}$ the progressive $\sigma$-field w.r.t. $\left\{\mathcal{F}_t\right\}_{t\geq 0}$.

Let $H$ and $U$ be  two separable  Hilbert spaces with inner products $\langle\cdot,\cdot\rangle_H$ and $\langle\cdot,\cdot\rangle_U$;
and norms $\|\cdot\|_H$ and $\|\cdot\|_U$, respectively. Fix $t \ge 0, p \in [1,\infty)$, we denote by $L^p_{\mathcal{F}_t}(\Omega;H)$  the Banach space consisting of all $H$-valued,  $\mathcal{F}_t$ measurable random
variables $X(t)$ endowed with the norm
$$
\|X(t)\|_{L^p_{\mathcal{F}_t}(\Omega;H)}=\bigg(\mathbb{E}\|X(t)\|^p_{H}\bigg)^{\frac{1}{p}}.
$$
Denote by $L^p_{\mathbb{F}}(0,T;L^q(\Omega;H))$, $p,q\in[1,\infty)$, the Banach space consisting of all $H$-valued, $\left\{\mathcal{F}_t\right\}_{t\geq 0}$-adapted processes $X$ endowed with the norm
$$
\|X(\cdot)\|_{L^p_{\mathbb{F}}(0,T;L^q(\Omega;H))}=\bigg(\int_0^T(\mathbb{E}\|X(t)\|^q_H)^{\frac{p}{q}}dt\bigg)^{\frac{1}{p}}.
$$
Denote by $L^p_{\mathbb{F}}(\Omega; L^q(0,T;H))$, $p,q\in[1,\infty)$, the Banach space consisting of all $H$-valued, $\left\{\mathcal{F}_t\right\}_{t\geq 0}$-adapted processes $X$ endowed with the norm
$$
\|X(\cdot)\|_{L^p_{\mathbb{F}}(\Omega; L^q(0,T;H))}=\bigg[\mathbb{E}\bigg(\int_0^T\|X(t)\|^q_H dt\bigg)^{\frac{p}{q}}\bigg]^{\frac{1}{p}}.
$$
Denote by $L^\infty_{\mathbb{F}}(0,T;\R)$, the Banach space consisting of all $\R$-valued, $\left\{\mathcal{F}_t\right\}_{t\geq 0}$-adapted bounded processes, with the essential supremum norm.\\
Denote by $L^q_{\mathbb{F}}(\Omega;C([0,T];H))$, $q\in[1,\infty)$, the Banach space consisting of all $H$-valued, $\left\{\mathcal{F}_t\right\}_{t\geq 0}$-adapted continuous processes $X$ endowed with the norm
$$
\|X(\cdot)\|_{L^q_{\mathbb{F}}(\Omega;C([0,T];H))}=\bigg(\mathbb{E}\|X(\cdot)\|^q_{C([0;T];H)}\bigg)^{\frac{1}{q}}.
$$

In the sequel, we shall simply denote $$L^p_{\mathbb{F}}(0,T;H):= L^p_{\mathbb{F}}(\Omega; L^p(0,T;H))~\text{with}~p\in[1,\infty),$$
and denote by $|\cdot|$ the Lebesgue measure on $\R^n,n\geq1$.

Let $0<\tilde{T}<T\leq2\tilde{T}$. Consider the following forward controlled stochastic evolution equation:
\begin{equation}
\label{eq:main-for}
\left\{
\begin{array}{ll}
dy = A ydt + F(t)y dW(t), & t\in(0,T)\setminus\{\tilde{T}\},   \\[2mm]
y(0) =y_0,\,y(\tilde{T})=y(\tilde{T}^-)+Bu,
\end{array}
\right.
\end{equation}
where $y_0\in L^2_{\cF_0}(\Omega;H)$, the control variable $u\in L^2_{\cF_{\tilde{T}}}(\Omega;U)$,
$$y(\tilde{T}^-):=\lim_{t\uparrow\tilde{T}}y(t)~\text{in}~H,\,\bP-a.s.,$$
the operator $A$ generating a $C_0$-semigroup $\{S(t)\}_{t\geq0}$ on $H$ is a linear self-adjoint operator from $D(A)$ into $H$, where $D(A)$, being the domain of $A$, is a subspace of $H$, $B\in\cL(U,H)$ (the space of linear continuous operators from $U$ to $H$) and denote by $B^*\in \cL(H,U)$ the adjoint operator of $B$ and $F\in L_{\F}^\infty(0,T;\R)$ is a given function. By the classical well-posedness result for forward stochastic evolution equations, see e.g., Section 3 in \cite{lv1},  we know that equation (\ref{eq:main-for}) admits a unique solution $y\in L^2_{\mathbb{F}}(\Omega;C([0,T];H))$.

Now, we write
\[
0<\lambda_1\le \lambda_2\le \cdots
\]
for the eigenvalues of $-A$, and $\{e_j\}_{j\ge 1}$ for corresponding eigenfunctions which form an orthonormal basis for $H$. For each $\lambda>0$, we define
\[
\cE_\lambda f = \sum_{\lambda_j\le\lambda} \langle f,e_j\rangle_H e_j,~\text{and}~
\cE^\perp_\lambda f = \sum_{\lambda_j>\lambda} \langle f,e_j\rangle_H e_j, ~\text{for each}~f \in H.
\]
Consequently, we have
\[
  f = \cE_\lambda f + \cE_\lambda^\perp f.
\]

Next, we introduce the following assumptions:

${\bf(H)}$: There are constants $\gamma\in(0,1)$ and $N>0$ such that for any $\lambda>0$,
$$
\|\cE_\lambda f\|_H\leq e^{N(1+\lambda^{\gamma})}\|B^*\cE_\lambda f\|_U,~\text{for all}~f\in H.
$$

${\bf(B)}$:
$$
\|B^*f\|_U\leq \|f\|_H,~\text{for all}~f\in H.
$$

${\bf(S)}$: Let a control sequence $\{u_n\}_{n\geq1}\subset L^2_{\cF_{T}}(\Omega;U)$ and solution $y_n(\cdot;y_0,u_n)$ be satisfied the following system 
$$
\left\{
\begin{array}{ll}
dy_n = A y_ndt + F(t)y_n dW(t), & D\times(0,T)\setminus\{\tilde{T}\},   \\[2mm]
y_n(0) =y_0,\,y_n(\tilde{T})=y_n(\tilde{T}^-)+Bu_n.
\end{array}
\right.
$$
If there is a $u\in L^2_{\cF_{T}}(\Omega;U)$ such that $$u_n\rightarrow u~\text{weakly in}~\,L^2_{\cF_{T}}(\Omega;U)~ \text{as}~\,n\rightarrow\infty,$$ then there is a $y\in L^2_{\mathbb{F}}(\Omega;C([0,T];H))$ solving \eqref{eq:main-for} such that $$y_n\rightarrow y~\text{strongly in}~C([0,T];H), \,\bP-a.s.,~ \text{as}~\,n\rightarrow\infty.$$

The following is our main result.
\begin{theorem}\label{thm:main}
Let $0<\tilde{T}<T\leq2\tilde{T}$. Suppose the assumptions ${\bf(H)}$ and ${\bf(B)}$ hold.  Then system \eqref{eq:main-for} is impulse approximate controllability. That is to say, for any $y_0 \in L^2_{\cF_0}(\Omega;H)$, $\epsilon>0$, the solution $y$ of \eqref{eq:main-for} satisfies the following inequality:
\begin{equation}\label{thm:main-ob}
\E\|y(T)\|_H^2 \leq \epsilon \E\|y_0\|_H^2.
\end{equation}
Moreover, there exist two constants $C_3>0,C_4>0$ such that the control $u\in L^2_{\cF_{\tilde{T}}}(\Omega;U)$ satisfies
\begin{equation}\label{thm:main-u}
\E\|u\|_U^2 \leq e^{C_4\left[1+\left(\frac{1}{T-\tilde{T}}\right)^\frac{\gamma}{1-\gamma}\right]}e^{\left[\frac{C_3}{T-\tilde{T}}\ln\left(e+\frac{1}{\epsilon}\right)\right]^\gamma} \E\|y_0\|_H^2.
\end{equation}
\end{theorem}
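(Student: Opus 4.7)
The plan is to adapt the Lebeau--Robbiano spectral strategy to this stochastic impulse setting: use the spectral inequality ${\bf(H)}$ to construct, at the single impulse instant $\tilde{T}$, an impulse control that eliminates the low-frequency part of the free-evolved state $y(\tilde{T}^-)$; then rely on parabolic-type dissipation of the remaining high-frequency part on $(\tilde{T},T)$ to drive $\E\|y(T)\|_H^2$ below $\epsilon\,\E\|y_0\|_H^2$; finally balance the two competing scales in the cut-off frequency $\lambda$ to reproduce the cost \eqref{thm:main-u}. A key structural fact I will exploit is that the scalar coefficient $F(t)$ and the self-adjoint $A$ both commute with the orthogonal spectral projections $\cE_\lambda$ and $\cE^\perp_\lambda$, so these projections commute with the SDE on each sub-interval $(0,\tilde{T})$ and $(\tilde{T},T)$; in particular, the low/high frequency splitting is preserved by the stochastic dynamics despite the noise.

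For the low-frequency step I first obtain, by a routine It\^o energy estimate for $\|y(t)\|_H^2$ on $[0,\tilde{T}]$ using $\langle Ay,y\rangle\le 0$ and $F\in L^\infty_\F(0,T;\R)$, the a priori bound $\E\|y(\tilde{T}^-)\|_H^2\le C\,\E\|y_0\|_H^2$. For a cut-off $\lambda>0$ to be chosen at the end, I lift ${\bf(H)}$ to $L^2_{\cF_{\tilde{T}}}(\Omega;H)$ by applying it $\omega$-wise and taking expectations, obtaining
\[
\E\|\cE_\lambda g\|_H^2\le e^{2N(1+\lambda^\gamma)}\,\E\|B^*\cE_\lambda g\|_U^2\quad \forall\,g\in L^2_{\cF_{\tilde{T}}}(\Omega;H).
\]
A HUM/duality argument carried out directly in the Hilbert space $L^2_{\cF_{\tilde{T}}}(\Omega;\cE_\lambda H)$ --- minimising the strictly convex, coercive functional $J(v)=\tfrac12\E\|B^*\cE_\lambda v\|_U^2+\E\langle v,y(\tilde{T}^-)\rangle_H$, whose coercivity is precisely the lifted ${\bf(H)}$ --- then yields a unique minimiser $v^*$ and an $\cF_{\tilde{T}}$-adapted control $u_\lambda := B^*\cE_\lambda v^*$ with $\cE_\lambda Bu_\lambda=-\cE_\lambda y(\tilde{T}^-)$ and $\E\|u_\lambda\|_U^2\le 4\,e^{2N(1+\lambda^\gamma)}\,\E\|\cE_\lambda y(\tilde{T}^-)\|_H^2$. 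Adaptedness is automatic because the minimisation lives entirely in the $\cF_{\tilde{T}}$-measurable space.

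After the impulse, $\cE_\lambda y(\tilde{T})=0$ while, using ${\bf(B)}$, $\E\|\cE^\perp_\lambda y(\tilde{T})\|_H^2\le 2\E\|\cE^\perp_\lambda y(\tilde{T}^-)\|_H^2+2\E\|u_\lambda\|_U^2$. Since the projections commute with the dynamics on $[\tilde{T},T]$, one has $\cE_\lambda y(t)\equiv 0$ for $t\in[\tilde{T},T]$ and $\cE^\perp_\lambda y$ solves the same linear SDE restricted to $\cE^\perp_\lambda H$; an It\^o computation on $\|\cE^\perp_\lambda y(t)\|_H^2$ combined with the spectral bound $\langle A\cE^\perp_\lambda y,\cE^\perp_\lambda y\rangle\le -\lambda\,\|\cE^\perp_\lambda y\|_H^2$ then yields
\[
\E\|y(T)\|_H^2=\E\|\cE^\perp_\lambda y(T)\|_H^2\le e^{(-2\lambda+\|F\|_{\infty}^2)(T-\tilde{T})}\,\E\|\cE^\perp_\lambda y(\tilde{T})\|_H^2.
\]
Chaining the three inequalities gives $\E\|y(T)\|_H^2\le C\,e^{-2\lambda(T-\tilde{T})}\,e^{2N(1+\lambda^\gamma)}\,\E\|y_0\|_H^2$.

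For the final step, since $\gamma\in(0,1)$ the linear dissipation dominates the $\lambda^\gamma$-cost as soon as $\lambda\ge c(T-\tilde{T})^{-1/(1-\gamma)}$, while reaching the target $\epsilon$ additionally requires $\lambda\ge c(T-\tilde{T})^{-1}\ln(e+1/\epsilon)$; choosing $\lambda$ equal to the maximum of these two scales (with appropriate constants) simultaneously produces \eqref{thm:main-ob} and reproduces exactly the cost estimate \eqref{thm:main-u}. I expect the main obstacle to be conceptual rather than computational: the deterministic Lebeau--Robbiano route to impulse controllability typically passes through the observability of a time-reversed adjoint equation, but in the stochastic setting the adjoint is a genuine backward SDE rather than a time-reversed forward one, so I deliberately bypass the BSDE detour and run the HUM directly in the forward formulation within the adapted Hilbert space $L^2_{\cF_{\tilde{T}}}(\Omega;\cE_\lambda H)$. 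The only genuinely new ingredient compared to the deterministic proof is the extra $\|F\|_{\infty}^2(T-\tilde{T})$ term produced by the It\^o correction in the dissipation estimate, which is harmless and gets absorbed into the constants $C_3,C_4$.
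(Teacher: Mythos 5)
Your argument is correct, but it follows a genuinely different route from the paper's. The paper never touches the forward spectral decomposition of $y$: it proves an interpolation inequality and then a relaxed observability inequality $\E\|z(0)\|_H^2\le l\,\E\|B^*z(T-\tilde T)\|_U^2+\epsilon\,\E\|\eta\|_H^2$ for the backward stochastic adjoint equation \eqref{eq:main} (Lemma \ref{decay}, Lemma \ref{lemma-1} and Proposition \ref{po-1}, which rely on the unique-continuation observability estimate of Lemma \ref{lemma:main}), and converts this into controllability-with-cost by a global HUM/duality argument (Lemma \ref{lem:J_min} and Theorem \ref{th-1}): the control is $u=lB^*\tilde z(T-\tilde T)$, with $\tilde z$ the adjoint solution emanating from the minimizer $\eta^*$ of $\cJ$, and it is precisely the $\cF_{T-\tilde T}$-measurability of this control that forces the standing hypothesis $T\le 2\tilde T$. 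You instead run a Lebeau--Robbiano argument directly on the forward equation: annihilate $\cE_\lambda y(\tilde T^-)$ by a finite-dimensional HUM at the impulse instant (legitimate because $F(t)$ is scalar-valued, so the spectral projections commute with the dynamics --- the same fact the paper uses in \eqref{eq:2} for the adjoint), then let the high modes dissipate on $(\tilde T,T)$. Your route buys three things: adaptedness of the control is automatic, since it is a measurable linear function of the $\cF_{\tilde T}$-measurable state $y(\tilde T^-)$, so the restriction $T\le2\tilde T$ plays no role; the unique-continuation input of Lemma \ref{lemma:main} is bypassed entirely, coercivity of your finite-dimensional functional coming straight from ${\bf(H)}$; and the backward-equation machinery, which the paper identifies as the main difficulty of the stochastic setting, is avoided. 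What the paper's route buys is the variational characterization of the control (the duality identity \eqref{th1-1} and the relation $y(T)=\epsilon\eta^*$), which is reused in the norm- and time-optimal control sections; your construction would not feed directly into those later arguments. The only steps you should write out in full are the $\omega$-wise lifting of ${\bf(H)}$ to $L^2_{\cF_{\tilde T}}(\Omega;H)$, the Euler--Lagrange identity $\cE_\lambda Bu_\lambda=-\cE_\lambda y(\tilde T^-)$ with its cost bound, and the It\^{o}/Gronwall dissipation estimate for $\cE^\perp_\lambda y$ (the forward analogue of Lemma \ref{decay}); all are routine, and your choice of $\lambda$ as the maximum of the two scales does reproduce the cost \eqref{thm:main-u}.
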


In order to prove Theorem \ref{thm:main}, let us introduce the following backward stochastic evolution equation:
\begin{equation}
\label{eq:main}
\left\{
\begin{array}{ll}
d z = -A zdt - F(t)Z dt + Z dW(t), & t\in(0,T),   \\[2mm]
z(T) =\eta,
\end{array}
\right.
\end{equation}
where $\eta\in L^2_{\mathcal{F}_T}(\Omega;H)$.
We denote by $(z(\cdot;\eta), Z(\cdot;\eta))$ the solution of equation
\eqref{eq:main} given the terminal condition $\eta=z(T)$.

Based on Theorem \ref{thm:main}, we consider the following minimal norm control problem ${\bf (NP)^T_{y_0}}$: Given $\epsilon>0$,
\begin{equation}\label{N}
N(T,y_0):=\{\E\|u\|_U^2: \E\|y(T)\|_H^2 \leq \epsilon \E\|y_0\|_H^2\},
\end{equation}
where $y$ is the solution of \eqref{eq:main-for}.

In problem ${\bf(NP)}_{y_0}^T$, a control $u \in L^2_{\cF_{T}}(\Omega;U)$ is called admissible, if $\E\|y(T)\|_H^2 \leq \epsilon \E\|y_0\|_H^2$ and a control $u^* \in L^2_{\cF_{T}}(\Omega;U)$ is called optimal, if it is admissible and $\E\|u^*\|_U^2= N(T, y_0)$, which is the optimal norm.

The second main result is to address the norm control problem ${\bf (NP)^T_{y_0}}$.
\begin{theorem}\label{thm:main2}
Suppose the assumptions ${\bf(H)}$, ${\bf(B)}$ and ${\bf(S)}$ hold. For any $y_0 \in L^2_{\cF_0}(\Omega;H)$ and given $\epsilon>0$, problem ${\bf(NP)}_{y_0}^T$ has a unique optimal control.
\end{theorem}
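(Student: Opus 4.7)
My plan is to argue by the direct method of the calculus of variations, combining Theorem \ref{thm:main} with the strict convexity of the squared Hilbert norm. To begin, I would verify the basic structural properties of the admissible set
\[
\cU_{\mathrm{ad}} := \bigl\{u \in L^2_{\cF_T}(\Omega;U) : \E\|y(T;y_0,u)\|_H^2 \leq \epsilon\, \E\|y_0\|_H^2\bigr\}.
\]
Nonemptiness of $\cU_{\mathrm{ad}}$ is precisely the content of Theorem \ref{thm:main}, which supplies a control satisfying \eqref{thm:main-ob}. Convexity follows because the state equation \eqref{eq:main-for} is linear in $(y_0,u)$ so that $u \mapsto y(T;y_0,u)$ is affine, while the target $\{\eta : \E\|\eta\|_H^2 \leq \epsilon\E\|y_0\|_H^2\}$ is clearly convex. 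In particular, $N(T,y_0)$ is a well-defined finite nonnegative number.

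For existence I would take a minimizing sequence $\{u_n\} \subset \cU_{\mathrm{ad}}$ with $\E\|u_n\|_U^2 \to N(T,y_0)$. Being bounded in the Hilbert space $L^2_{\cF_T}(\Omega;U)$, after extracting a subsequence we may assume $u_n \to u^*$ weakly. Assumption $\mathbf{(S)}$ is the decisive ingredient: it yields $y_n \to y$ strongly in $C([0,T];H)$, $\bP$-almost surely, where $y_n := y(\cdot;y_0,u_n)$ and $y$ is the solution of \eqref{eq:main-for} corresponding to $u^*$. Consequently $\|y_n(T)\|_H^2 \to \|y(T)\|_H^2$ almost surely, and Fatou's lemma gives
\[
\E\|y(T)\|_H^2 \leq \liminf_{n \to \infty} \E\|y_n(T)\|_H^2 \leq \epsilon\, \E\|y_0\|_H^2,
\]
placing $u^*$ in $\cU_{\mathrm{ad}}$. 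Weak lower semicontinuity of the norm then yields $\E\|u^*\|_U^2 \leq \liminf_n \E\|u_n\|_U^2 = N(T,y_0)$, so equality holds and $u^*$ is optimal.

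Uniqueness I would obtain from strict convexity of the squared Hilbert norm. If $u_1^*$ and $u_2^*$ were both optimal, convexity of $\cU_{\mathrm{ad}}$ puts $(u_1^* + u_2^*)/2$ in $\cU_{\mathrm{ad}}$, so its squared norm is at least $N(T,y_0)$. On the other hand, the parallelogram identity in $L^2_{\cF_T}(\Omega;U)$ gives
\[
\E\Bigl\|\tfrac{u_1^* + u_2^*}{2}\Bigr\|_U^2 = N(T,y_0) - \tfrac{1}{4}\,\E\|u_1^* - u_2^*\|_U^2,
\]
forcing $u_1^* = u_2^*$ in $L^2_{\cF_T}(\Omega;U)$.

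The main technical obstacle is the passage to the limit in the constraint under weak convergence of controls. The multiplicative noise $F(t)\,y\,dW(t)$ in \eqref{eq:main-for} obstructs a direct continuity/compactness argument on the control-to-state map (nothing like a deterministic smoothing effect is available pathwise for the stochastic flow), which is precisely why $\mathbf{(S)}$ has been postulated as a standing hypothesis rather than derived; once $\mathbf{(S)}$ is granted, the remainder reduces to a routine application of the direct method plus the Hilbert-space parallelogram law.
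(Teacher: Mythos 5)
Your proposal is correct and follows essentially the same route as the paper: nonemptiness of the admissible set via Theorem \ref{thm:main}, a minimizing sequence with weak compactness in $L^2_{\cF_T}(\Omega;U)$, assumption ${\bf(S)}$ to pass to the limit in the state constraint, weak lower semicontinuity of the norm for optimality, and strict convexity (your parallelogram identity is the same fact the paper invokes) for uniqueness. Your explicit appeal to Fatou's lemma to go from the $\bP$-a.s.\ convergence furnished by ${\bf(S)}$ to the expectation inequality is a slightly more careful justification of a step the paper leaves implicit, but it is not a different approach.
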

Next, consider the following forward controlled stochastic evolution equation:
\begin{equation}
\label{eq:main-for2}
\left\{
\begin{array}{ll}
dy = A ydt + F(t)y dW(t), & D\times(0,+\infty)\setminus\{\tilde{T}\},   \\[2mm]
y(0) =y_0,\,y(\tilde{T})=y(\tilde{T}^-)+Bu.
\end{array}
\right.
\end{equation}

For each $M>0$, we define a control constraint set $\mathcal{U}_M$ as follows:
\begin{equation*}
\mathcal{U}_M\triangleq \{u\in L^2_{\cF_{T}}(\Omega;U): \|u\|_{L^2_{\cF_{T}}(\Omega;U)}\leq M\}.
\end{equation*}
Given $\tilde{T}>0$, $\epsilon>0$ and $M>0$, we now consider the minimal time control problem
\begin{equation}\label{impulse-2(1)}
{\bf (TP)}:\;\;\;\;T^*\triangleq \inf_{u\in \mathcal{U}_M} \{T>\tilde{T}: y(T; y_0, u)\in B_\epsilon(0)\},
\end{equation}
where
$$
B_\epsilon(0):=\{f\in L^2_{\cF_{T}}(\Omega;H):\E\|f\|_H^2 \leq \epsilon \E\|y_0\|_H^2\},
$$
and $y(\cdot;y_0,u)$ is the solution of \eqref{eq:main-for2}.

For this problem ${\bf (TP)}$, we say that $B_\epsilon(0)$ is the target set, and that $u\in \mathcal{U}_M$ is an admissible control if there exists a $T>\tilde{T}$ such that $y(T; y_0, u)\in B_\epsilon(0)$;  we denote by $T^*$ the optimal time if it exists, and by $u^*\in \mathcal{U}_M$ an optimal control if $T^*>\tilde{T}$ such that $y(T^*; y_0, u^*)\in B_\epsilon(0)$.

The third main result is to address the time optimal control problem ${\bf (TP)}$.
\begin{theorem}\label{impulse-4}
Suppose the assumptions ${\bf(H)}$, ${\bf(B)}$ and ${\bf(S)}$ hold. Let $\tilde{T}>0$, $M>0$, $\epsilon> 0$ and $\eta\in L^2_{\cF_T}(\Omega;H)$ with $\eta\neq0$. Then, the time optimal control problem ${\bf (TP)}$ admits a solution, i.e., there exists a unique optimal control $u^*\in \mathcal{U}_M$ such that when $T^*>\tilde{T}$, we have $y(T^*; y_0, u^*)\in B_\epsilon(0)$. Moreover, the optimal control is defined as
$$
u^*=M\frac{B^* z(\tilde{T})}{\left(\E\|B^* z(\tilde{T})\|_U^2\right)^{\frac{1}{2}}},
$$
where $z(\tilde{T};\eta)$ is the solution of \eqref{eq:main} at time $\tilde{T}$,
and it satisfies
\begin{equation}\label{tu}
\E\|u^*\|_U^2=M^2.
\end{equation}
\end{theorem}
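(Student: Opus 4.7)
The plan has three stages: existence of the optimal pair $(T^*, u^*)$ via weak compactness coupled with assumption \textbf{(S)}, derivation of the explicit formula via an It\^o duality between the forward equation \eqref{eq:main-for2} and the backward adjoint equation \eqref{eq:main}, and uniqueness via the strict equality case in Cauchy--Schwarz.

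For existence, first note that the feasible set $\{T > \tilde{T} : \exists\,u \in \cU_M,\; y(T; y_0, u) \in B_\epsilon(0)\}$ is nonempty: by the control bound \eqref{thm:main-u} in Theorem \ref{thm:main}, choosing $T - \tilde{T}$ suitably forces the constructed control's norm to fit inside $\cU_M$. Take a minimizing sequence $T_n \downarrow T^*$ with admissible $u_n \in \cU_M$ satisfying $y(T_n; y_0, u_n) \in B_\epsilon(0)$. Since $\{u_n\}$ is bounded in the reflexive Hilbert space $L^2_{\cF_T}(\Omega; U)$, extract a weakly convergent subsequence $u_n \rightharpoonup u^*$; weak lower semicontinuity of the norm gives $u^* \in \cU_M$. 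Assumption \textbf{(S)} then upgrades this to strong convergence of states $y_n \to y^*$ in $C([0,T]; H)$, $\bP$-a.s. Combined with $T_n \downarrow T^*$ and path-continuity of $y^*$ on $(\tilde{T}, T]$, the target constraint passes to the limit via Fatou, giving $\E\|y(T^*; y_0, u^*)\|_H^2 \leq \epsilon \E\|y_0\|_H^2$, so $(T^*, u^*)$ is optimal.

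For the explicit form, optimality must force $\E\|u^*\|_U^2 = M^2$: otherwise a slight decrease of $T^*$ would remain feasible by continuity, contradicting minimality. With $\eta$ playing the role of an outward normal direction to $B_\epsilon(0)$ at $y(T^*)$, apply It\^o's formula to $\langle y(t), z(t)\rangle_H$ separately on $[0, \tilde{T})$ and $[\tilde{T}, T^*]$. Self-adjointness of $A$ cancels $\langle Ay, z\rangle - \langle y, Az\rangle$, and the It\^o cross-variation $F(t)\langle y, Z\rangle\,dt$ from $F(t)y\,dW$ and $Z\,dW$ cancels with the backward drift $-F(t)Z\,dt$ in \eqref{eq:main}. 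The only surviving term is the impulse jump at $\tilde{T}$, yielding
\begin{equation*}
\E\langle y(T^*), \eta\rangle_H \;=\; \E\langle y_0, z(0)\rangle_H + \E\langle u^*, B^* z(\tilde{T})\rangle_U.
\end{equation*}
Only the last term depends on $u^*$; maximizing it over the sphere of radius $M$ in $L^2_{\cF_T}(\Omega; U)$ by Cauchy--Schwarz forces $u^* = M\, B^* z(\tilde{T})/(\E\|B^* z(\tilde{T})\|_U^2)^{1/2}$, which directly gives \eqref{tu}. Uniqueness follows from strictness of the Cauchy--Schwarz equality case.

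The main obstacle will be handling the It\^o duality rigorously across the impulse time $\tilde{T}$ while preserving adaptedness: as emphasized in the introduction, the forward and backward stochastic equations are no longer equivalent (unlike the deterministic parabolic case), so the $Z$-process must be tracked explicitly through the jump, and the prescribed $u^*$ must remain $\cF_{\tilde{T}}$-measurable (which it is, since $z(\tilde{T})$ is). A secondary technicality is to ensure $\E\|B^* z(\tilde{T})\|_U^2 > 0$ so that the normalization is well-defined; this is an observability statement dual to Theorem \ref{thm:main} and follows from assumption \textbf{(H)} together with the hypothesis $\eta \neq 0$.
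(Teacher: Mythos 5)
Your existence argument (minimizing sequence, weak compactness of $\cU_M$, assumption ${\bf(S)}$ to upgrade to strong convergence of states, passing the target constraint to the limit) matches the paper's Lemma \ref{lemma-9}, and your It\^o duality identity and the Cauchy--Schwarz equality case are exactly the ingredients of the paper's Step 2. But there is a genuine gap at the central step: you never justify \emph{why} the time-optimal control must maximize $\E\langle u, B^*z(\tilde{T})\rangle_U$ over $\cU_M$. The duality identity holds for \emph{every} admissible $u$, so by itself it says nothing about which $u$ is optimal; the phrase ``with $\eta$ playing the role of an outward normal direction'' is the conclusion of a maximum principle, not a proof of one. The paper closes this gap with a convex perturbation argument: writing $u_\alpha=(1-\alpha)u^*+\alpha u\in\cU_M$ and using that time-optimality forces $\E\|y(T^*;y_0,u^*)\|_H^2=\epsilon\E\|y_0\|_H^2$ (the optimal trajectory must hit the boundary of $B_\epsilon(0)$, else an earlier time would work) while $\E\|y(T^*;y_0,u_\alpha)\|_H^2\geq\epsilon\E\|y_0\|_H^2$ (else $u_\alpha$ would beat $T^*$), one expands the square, divides by $\alpha$, and lets $\alpha\to0$ to get the variational inequality \eqref{lemma-OP-eq99}; only then does the It\^o duality convert it into the maximum condition $\E\langle B^*z(\tilde{T}),u^*\rangle_U=\max_{u\in\cU_M}\E\langle B^*z(\tilde{T}),u\rangle_U$, from which Cauchy--Schwarz yields the explicit formula. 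You need some version of this step.

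A secondary problem is your claim that $\E\|u^*\|_U^2=M^2$ because ``otherwise a slight decrease of $T^*$ would remain feasible by continuity.'' Having slack in the control norm does not by itself make earlier times reachable; turning that intuition into a proof would require a separate controllability/continuity argument that you do not supply. In the paper this norm equality is not an input but an output: it falls out of the explicit formula \eqref{u0} once the maximum condition is established, and you should obtain it the same way. Your uniqueness argument via the strict equality case in Cauchy--Schwarz is fine (indeed slightly cleaner than the paper's parallelogram-law Step 3, since the formula already pins down every optimal control), and your remarks on adaptedness of $B^*z(\tilde{T})$ and on the positivity $\E\|B^*z(\tilde{T})\|_U^2>0$ via observability agree with the paper's use of the claim \eqref{claim}.
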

Several Remarks are given in order.
\begin{remark}
The system \eqref{eq:main-for2} can be understood as follows: Over $(0,T)$, the system \eqref{eq:main-for2} with an impulse control $u \in L^2_{\cF_{\tilde{T}}}(\Omega;U)$ is controlled, which is the same as system \eqref{eq:main-for}. Then, over $(T,+\infty)$, we let the system \eqref{eq:main-for2} without control freely evolve.
\end{remark}
\begin{remark}
In the event that no admissible control exists for ${\bf (NP)^T_{y_0}}$ and ${\bf (TP)}$, we say that $\inf\varnothing=+\infty$. However, it is noteworthy to emphasize that the sets of admissible controls for ${\bf (NP)^T_{y_0}}$ and ${\bf (TP)}$ are, in fact, not empty based on Theorem \ref{thm:main}.
\end{remark}
\begin{remark}
In Theorem \ref{thm:main}, we make an assumption $0<\tilde{T}<T\leq2\tilde{T}$. This is because in the proof of the impulse approximate controllability of system \eqref{eq:main-for}, we construct a control $u\in L^2_{\cF_{T-\tilde{T}}}(\Omega;U)$, which along with $y(\tilde{T})=y(\tilde{T}^-)+Bu$ in system \eqref{eq:main-for} and the adaptedness of $y(\tilde{T})$, it implies that  we should assume $0<\tilde{T}<T\leq2\tilde{T}$. For a detailed exposition, please refer to Theorem \ref{th-1}.
\end{remark}
\begin{remark}
In Theorem \ref{impulse-4}, we suppose $\eta\neq0$. This is because by the claim \eqref{claim} below and the assumption ${\bf(B)}$, we see that $\E\|B^*z(t)\|_U^2\neq0$. Then the optimal control $u^*$ defined in Theorem \ref{impulse-4} is meaningful. However, in Theorem \ref{thm:main} and Theorem \ref{thm:main2}, we do not suppose $\eta\neq0$.
\end{remark}

\begin{remark}
We shall present some specific equations which are covered by \eqref{eq:main-for} and satisfy the assumption ${\bf (H)}$, ${\bf (B)}$ and ${\bf (S)}$, please see Section 7 for more details.
\end{remark}

\section{Auxiliary conclusions}
In this section, we give some auxiliary results that will be used later. At first, by linearity for \eqref{eq:main}, it is easy to check that
\begin{align}
  & z(t; \cE_\lambda \eta) = \sum_{\lambda_j\le \lambda}
  z_j(t;\eta_j)e_j = \cE_\lambda z(t;\eta);\nonumber\\
& z(t; \cE^\bot_\lambda \eta) = \sum_{\lambda_j> \lambda}
  z_j(t; \eta_j)e_j = \cE^\bot_\lambda z(t;\eta), \label{eq:2}
\end{align}
where $\eta_j=\langle\eta,e_j\rangle_H$ and $(z_j(\cdot;\eta_j), Z_j(\cdot;\eta_j))$ is the solution of the following backward
stochastic differential equation
\begin{equation}
\label{eq:main-1}
\left\{
\begin{array}{ll}
d z_j = \lambda_j z_jdt + F(t)Z_j(t) dt + Z_j dW(t), & t\in(0,T),   \\[2mm]
z_j(T) =\eta_j.
\end{array}
\right.
\end{equation}
Set
$$
\tau=\|F\|_{L_{\F}^\infty(0,T;\R)}^2.
$$

Now recall an important result used later in this paper:
\begin{lemma}\cite[Lemma 2.3]{liu2023observability}
\label{decay}
Given any $\eta$ in the space of  $L^2_{\mathcal{F}_T}(\Omega;H)$,
    we have for each $t\in [0,T]$, the solution of \eqref{eq:main} satisfies
  \begin{equation}
    \label{eq:decay}
    \E \|z(t ;\cE_\lambda^\bot \eta)\|_H^2 \leq e^{(-2\lambda+\tau)(T-t)} \E \|\eta\|_H^2.
  \end{equation}
\end{lemma}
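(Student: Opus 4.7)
The plan is to reduce to the one-dimensional BSDEs \eqref{eq:main-1} by using the orthogonal decomposition already recorded in \eqref{eq:2}. Since $\{e_j\}_{j\ge 1}$ is orthonormal, Parseval gives
\begin{equation*}
\E\|z(t;\cE_\lambda^\perp\eta)\|_H^2 = \sum_{\lambda_j>\lambda}\E|z_j(t;\eta_j)|^2,
\qquad \E\|\eta\|_H^2 = \sum_{j\ge 1}\E|\eta_j|^2,
\end{equation*}
so it suffices to prove the componentwise bound
\begin{equation*}
\E|z_j(t;\eta_j)|^2 \le e^{(-2\lambda_j+\tau)(T-t)}\,\E|\eta_j|^2
\end{equation*}
for every $j$, then use $\lambda_j>\lambda$ to absorb the estimate into $e^{(-2\lambda+\tau)(T-t)}$ and sum.

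For the componentwise bound, the key step is to apply the Itô formula to $|z_j(s)|^2$ using the BSDE \eqref{eq:main-1}. Writing $\psi_j(s):=\E|z_j(s)|^2$ and integrating from $t$ to $T$, the stochastic integral vanishes in expectation and one arrives at
\begin{equation*}
\psi_j(T)-\psi_j(t) = \int_t^T \E\bigl[2\lambda_j|z_j(s)|^2 + 2F(s)z_j(s)Z_j(s) + |Z_j(s)|^2\bigr]\,ds.
\end{equation*}
Differentiating in $t$ gives $\psi_j'(t) = \E\bigl[2\lambda_j|z_j|^2 + 2Fz_jZ_j + |Z_j|^2\bigr](t)$. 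Now I would use the elementary Young inequality $2F z_j Z_j \ge -F^2|z_j|^2 - |Z_j|^2$ together with the definition $\tau = \|F\|_{L^\infty_{\mathbb{F}}(0,T;\R)}^2$ to cancel the martingale-part term $|Z_j|^2$ and produce
\begin{equation*}
\psi_j'(t) \ge (2\lambda_j-\tau)\,\psi_j(t).
\end{equation*}

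The remaining step is Gronwall's inequality run backwards in time: from $\bigl(\psi_j(t)e^{-(2\lambda_j-\tau)t}\bigr)'\ge 0$ one gets $\psi_j(t)\le \psi_j(T)\,e^{(\tau-2\lambda_j)(T-t)}$, which is exactly the desired componentwise estimate. Summing over $\lambda_j>\lambda$ and using $e^{(\tau-2\lambda_j)(T-t)}\le e^{(\tau-2\lambda)(T-t)}$ then yields \eqref{eq:decay}. The only subtle point — and what I expect to be the main obstacle — is the Young-inequality balancing of the cross term $2Fz_jZ_j$ against the Itô correction $|Z_j|^2$; getting the sign right here is what makes the estimate tight enough that the exponent depends on $\tau$ rather than on a larger constant, and it is what allows $Z_j$ (which is not itself controlled a priori) to drop out completely from the final inequality.
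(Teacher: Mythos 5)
Your proposal is correct, and it is the standard argument: spectral decomposition via \eqref{eq:2} and Parseval, It\^o's formula applied to $|z_j|^2$ for each modal BSDE \eqref{eq:main-1}, the Young inequality $2Fz_jZ_j \ge -F^2|z_j|^2-|Z_j|^2$ to cancel the It\^o correction $|Z_j|^2$ and replace $F^2$ by $\tau$, and a backward Gronwall step, followed by $\lambda_j>\lambda$ to uniformize the exponent. The paper itself gives no proof of this lemma --- it is quoted verbatim from \cite[Lemma 2.3]{liu2023observability} --- so your write-up simply supplies the argument the paper outsources to that citation (and note in passing that the sign of the $F(t)Z_j$ drift term, which differs between \eqref{eq:main} and \eqref{eq:main-1}, is immaterial to your estimate since Young's inequality absorbs either sign).
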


\begin{lemma}\label{lemma:main}
Given $T>0$. Suppose $E\subset(0,T)$ is a measurable set with positive measure.  Then there exists a positive constant $C$ such that the solution $z$ to the equation \eqref{eq:main} satisfies the following observability inequality
\begin{equation}\label{lemma:main-ob}
\E\|z(0;\eta)\|_H^2 \leq C \|\chi_EB^*z(\cdot;\eta)\|^2_{L^2_\F(0,T;U)}.
\end{equation}
\end{lemma}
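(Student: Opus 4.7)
The plan is to combine the spectral inequality ${\bf (H)}$ with the high-frequency dissipation of Lemma~\ref{decay} via a Lebeau--Robbiano telescoping on dyadic time slabs, after first reducing from the merely measurable set $E$ to a left neighborhood of a density point. First I would pick a density point $\ell \in (0,T)$ of $E$ via Lebesgue's density theorem, so that for some $\delta_0>0$ small and every $\delta\in(0,\delta_0]$ one has $|E\cap(\ell-\delta,\ell)|\ge \delta/2$. Applying It\^o's formula to $\|z(t)\|_H^2$ and using the boundedness of $F$ one derives a reverse energy bound $\E\|z(s)\|_H^2 \le e^{\tau(t-s)}\E\|z(t)\|_H^2$ for $0\le s\le t\le T$. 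In particular $\E\|z(0)\|_H^2\le e^{\tau\ell}\E\|z(\ell)\|_H^2$, so it suffices to bound $\E\|z(\ell)\|_H^2$ by the observation on $E\cap(\ell-\delta_0,\ell)$.

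Next, set $a_k:=\ell-\delta_0/2^k$ for $k\ge 0$, so that $a_k\uparrow\ell$ and each slab $I_k:=(a_{k-1},a_k)$ has length $\delta_0/2^k$ with $|E\cap I_k|\ge |I_k|/2$ by the density property. Choose frequency cutoffs $\lambda_k:=(c_0\,2^k/\delta_0)^{1/(1-\gamma)}$ for a large constant $c_0$ to be fixed, so that $\lambda_k|I_k|$ dominates $\lambda_k^\gamma$ as $k$ grows; this exploits $\gamma<1$. On each slab, decompose $z(t)=\cE_{\lambda_k}z(t)+\cE_{\lambda_k}^\perp z(t)$, apply ${\bf (H)}$ pointwise in $\omega$ to $\cE_{\lambda_k}z(t,\omega)$ and use ${\bf (B)}$ to bound $\|B^*\cE_{\lambda_k}^\perp z\|_U\le\|\cE_{\lambda_k}^\perp z\|_H$. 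For $t\in I_k$ this yields
\begin{equation*}
\E\|z(t)\|_H^2 \le 2e^{2N(1+\lambda_k^\gamma)}\E\|B^*z(t)\|_U^2 + 4e^{2N(1+\lambda_k^\gamma)}\E\|\cE_{\lambda_k}^\perp z(t)\|_H^2.
\end{equation*}
Viewing the backward equation on $[t,a_k]$ as a BSDE with terminal datum $z(a_k)$ and invoking Lemma~\ref{decay} gives $\E\|\cE_{\lambda_k}^\perp z(t)\|_H^2\le e^{(-2\lambda_k+\tau)(a_k-t)}\E\|z(a_k)\|_H^2$.

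Integrating in $t\in E\cap I_k$ and using the reverse energy estimate of Step~1 to pass from an average over $I_k$ to the value $\E\|z(a_{k-1})\|_H^2$, I obtain a slab inequality
\begin{equation*}
\E\|z(a_{k-1})\|_H^2 \le A_k\int_{E\cap I_k}\E\|B^*z(t)\|_U^2\,dt + \alpha_k\,\E\|z(a_k)\|_H^2
\end{equation*}
with $A_k\le C e^{C\lambda_k^\gamma}/|I_k|$ and $\alpha_k\le C e^{C\lambda_k^\gamma - c_1\lambda_k|I_k|}$. For $c_0$ chosen large, $\alpha_k\le 2^{-k}$; iterating on $k$ and passing to the limit (using path continuity $z(a_k)\to z(\ell)$ in $L^2(\Omega;H)$, which holds since $z\in L^2_\F(\Omega;C([0,T];H))$) gives a convergent telescoping sum with total constant $\sum_k A_k\prod_{j<k}\alpha_j<\infty$. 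This bounds $\E\|z(\ell)\|_H^2$ by $C\int_E\E\|B^*z(t)\|_U^2\,dt$, and combining with Step~1 yields \eqref{lemma:main-ob}.

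The main obstacle is the scale balancing in the previous step: the spectral cost $e^{N\lambda_k^\gamma}$ must be dominated by the dissipation gain $e^{-c_1\lambda_k|I_k|}$ uniformly in $k$, and the sub-unit exponent $\gamma<1$ in ${\bf (H)}$ is precisely what permits such a choice of the geometric sequences $\{\delta_0/2^k\}$ and $\{\lambda_k\}$. A secondary, stochastic-specific subtlety is that ${\bf (H)}$ is a deterministic inequality, so it has to be applied $\omega$-wise to the $\cF_t$-measurable random variable $\cE_{\lambda_k}z(t)$; adaptedness is preserved throughout because $\cE_\lambda$ and $B^*$ commute with the $\omega$-sectioning.
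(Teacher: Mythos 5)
The paper does not actually prove this lemma at all: its ``proof'' is a one-line citation of Theorem 2.1 in \cite{liu2023observability}. What you have written is therefore a from-scratch reconstruction of that cited result by the Lebeau--Robbiano telescoping at a Lebesgue density point, which is indeed the standard (and almost certainly the cited) route. Most of your architecture is sound: the reverse energy estimate $\E\|z(s)\|_H^2\le e^{\tau(t-s)}\E\|z(t)\|_H^2$ for the backward equation is correct, ${\bf(H)}$ can legitimately be applied $\omega$-wise before taking expectations, the restriction of $(z,Z)$ to $[0,a_k]$ is the BSDE with terminal datum $z(a_k)$ so Lemma~\ref{decay} applies on each slab, and the balance $\lambda_k|I_k|=c_0\cdot\lambda_k^{\gamma}\cdot(\text{const})$ that $\gamma<1$ makes possible is exactly the right mechanism.

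There is, however, one step that fails as literally written: the derivation of $\alpha_k\le Ce^{C\lambda_k^\gamma-c_1\lambda_k|I_k|}$ by integrating over all of $E\cap I_k$ with reference time $a_k$. For $t\in I_k=(a_{k-1},a_k)$ the gap $a_k-t$ can be arbitrarily small, so $e^{(-2\lambda_k+\tau)(a_k-t)}$ is not uniformly small on the slab; integrating it over $I_k$ gains only a factor of order $1/(\lambda_k|I_k|)$, which is polynomial in $\lambda_k$ and cannot absorb the exponential cost $e^{2N(1+\lambda_k^\gamma)}$ coming from ${\bf(H)}$ --- with your choice of $\lambda_k$ this would give $\alpha_k\to\infty$, and the iteration collapses. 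The standard repair is to insert a gap of length comparable to $|I_k|$ between the observation window and the reference time: either observe only on the left half $J_k$ of $I_k$ (the density-point reduction still yields $|E\cap J_k|\ge|J_k|/2$ provided you take the density threshold at least $7/8$ rather than $1/2$), or keep $E\cap I_k$ but measure the high-frequency decay against $\E\|z(a_{k+1})\|_H^2$, so that the exponent involves $a_{k+1}-t\ge|I_{k+1}|$ for all $t\in I_k$; the latter is the three-point recursion of Apraiz--Escauriaza--Wang--Zhang. With that fix your bookkeeping does close: since $\lambda_k|I_k|/\lambda_k^{\gamma}=c_0$ can be made large, $\alpha_k\le Ce^{-c_2\lambda_k|I_k|}$ with $c_2>0$, the partial sums $\sum_{j<k}\lambda_j|I_j|$ form a geometric series comparable to $\lambda_k|I_k|$ and hence dominate the $e^{C\lambda_k^\gamma}$ in $A_k$, so $\sum_k A_k\prod_{j<k}\alpha_j<\infty$; and the remainder term vanishes because $\E\|z(a_k)\|_H^2$ is uniformly bounded by the energy estimate (you do not even need the pathwise convergence $z(a_k)\to z(\ell)$).
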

\begin{proof}
From Theorem 2.1 in \cite{liu2023observability}, we can obtain \eqref{lemma:main-ob}.
\end{proof}

Next, based on Lemma \ref{decay} and Lemma \ref{lemma:main}, we provide an interpolation inequality for system \eqref{eq:main}.
\begin{lemma}
\label{lemma-1}
Suppose the assumptions ${\bf(H)}$ and ${\bf(B)}$ hold. Given $\eta\in L^2_{\mathcal{F}_T}(\Omega;H)$, we have for each $t\in [0,T)$, $\theta\in(0,1)$ such that there exists a constant $C>0$, the solution of \eqref{eq:main} satisfies
  \begin{equation}
    \label{lemma1-eq1}
    \E \|z(t ; \eta)\|_H^2 \leq e^{C\left[1+\left(\frac{1}{\theta(T-t)}  \right)^\frac{\gamma}{1-\gamma}\right]} \left(\E\|B^*z(t;\eta)\|_U^2\right)^{1-\theta} \left(\E \|\eta\|_H^2\right)^\theta.
  \end{equation}
\end{lemma}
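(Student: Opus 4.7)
The plan is to apply the Lebeau--Robbiano two-level frequency decomposition: split $z(t;\eta)=z(t;\cE_\lambda\eta)+z(t;\cE_\lambda^\perp\eta)$ via \eqref{eq:2}, estimate the two pieces separately, and then optimize the cutoff $\lambda>0$. Applying assumption $\mathbf{(H)}$ pathwise in $\omega$ to the low-frequency part $\cE_\lambda z(t;\eta)\in H$ and taking expectations yields
\begin{equation*}
\E\|\cE_\lambda z(t;\eta)\|_H^2\le e^{2N(1+\lambda^\gamma)}\,\E\|B^*\cE_\lambda z(t;\eta)\|_U^2.
\end{equation*}
Rewriting $B^*\cE_\lambda z=B^*z-B^*\cE_\lambda^\perp z$, using $(a+b)^2\le 2a^2+2b^2$, and then applying $\mathbf{(B)}$ together with Lemma~\ref{decay} to $\cE_\lambda^\perp z(t;\eta)=z(t;\cE_\lambda^\perp\eta)$, I would combine this with the direct estimate $\E\|\cE_\lambda^\perp z(t;\eta)\|_H^2\le e^{(-2\lambda+\tau)(T-t)}\E\|\eta\|_H^2$ from Lemma~\ref{decay} to arrive at
\begin{equation*}
\E\|z(t;\eta)\|_H^2\le C_1 e^{c_1\lambda^\gamma}A+C_2 e^{c_1\lambda^\gamma-2\lambda(T-t)+\tau(T-t)}B,
\end{equation*}
where $A:=\E\|B^*z(t;\eta)\|_U^2$, $B:=\E\|\eta\|_H^2$, and $C_i,c_i$ depend only on $N$ and $\tau$.

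Since $\gamma\in(0,1)$, once $\lambda$ exceeds the threshold $\lambda_0\sim (T-t)^{-1/(1-\gamma)}$ the quantity $c_1\lambda^\gamma-2\lambda(T-t)$ is dominated by $-\lambda(T-t)$, so the estimate simplifies to $C_1'e^{c_1\lambda^\gamma}A+C_2'e^{-\lambda(T-t)}B$. In the nontrivial case $A<B$, I would choose $\lambda$ so that $C_2'e^{-\lambda(T-t)}B=\tfrac{1}{2}A^{1-\theta}B^\theta$, giving $\lambda(T-t)\sim \ln(2C_2')+(1-\theta)\ln(B/A)$. Plugging this $\lambda$ into the first term and using the subadditivity $(a+b)^\gamma\le a^\gamma+b^\gamma$ for $\gamma\in(0,1)$ reduces the claim to a pointwise upper bound on
\begin{equation*}
f(v):=\frac{c_1(1-\theta)^\gamma v^\gamma}{c_2^{\gamma}(T-t)^\gamma}-\theta v,\qquad v=\ln(B/A)\ge 0.
\end{equation*}
A direct maximization gives $\max_{v\ge 0}f(v)\le C'\theta^{-\gamma/(1-\gamma)}(1-\theta)^{\gamma/(1-\gamma)}(T-t)^{-\gamma/(1-\gamma)}$; since $(1-\theta)^{\gamma/(1-\gamma)}\le 1$, this produces exactly the constant $e^{C[1+(1/(\theta(T-t)))^{\gamma/(1-\gamma)}]}$ claimed in \eqref{lemma1-eq1}.

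The main technical obstacle is extracting the sharp $\theta$-dependence: the naive balance between the two exponential terms only yields a $\ln(B/A)$-type constant, while producing the exponent $\gamma/(1-\gamma)$ and the correct blow-up rate $\theta^{-\gamma/(1-\gamma)}$ as $\theta\downarrow 0$ requires the Young/maximization step above. Two minor side cases remain. (i) If the optimal $\lambda$ lies below the threshold $\lambda_0$, I would simply take $\lambda=\lambda_0$, which already produces an exponential factor of the desired shape because $\lambda_0^\gamma\sim (T-t)^{-\gamma/(1-\gamma)}$. (ii) If $A\ge B$, an elementary energy estimate on the backward equation~\eqref{eq:main} gives $\E\|z(t;\eta)\|_H^2\le e^{c\tau(T-t)}B\le e^{c\tau(T-t)}A^{1-\theta}B^\theta$. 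Neither case affects the final form of the constant.
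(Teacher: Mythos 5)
Your argument is correct and follows essentially the same route as the paper: the spectral splitting $z=\cE_\lambda z+\cE_\lambda^\perp z$, assumption $\mathbf{(H)}$ on the low frequencies, Lemma~\ref{decay} with $\mathbf{(B)}$ on the high frequencies, and an optimization in $\lambda$ whose Young-type maximization of $av^\gamma-\theta v$ produces the $\theta^{-\gamma/(1-\gamma)}$ blow-up in the constant. The only differences are organizational --- you balance the high-frequency term first and then apply Young's inequality, whereas the paper applies Young's inequality to $N\lambda^\gamma$ first and then chooses $\lambda=\frac{1}{T-t}\ln\frac{C_0\E\|\eta\|_H^2}{\E\|B^*z(t)\|_U^2}$ (using the a priori bound $\E\|B^*z(t)\|_U^2\le C_0\E\|\eta\|_H^2$ to dispose of your side cases) --- and these lead to the same constant.
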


\begin{proof}
 We only need to prove the case $\eta\neq0$. Set $z = z(\cdot;\eta)$, then it follows from the
 spectral-like condition ${\bf(H)}$ that
\begin{align*}
  \E \|\cE_\lambda z(t)\|_H^2
& \leq e^{N(1+\lambda^{\gamma})}\E\|B^*\cE_\lambda z(t)\|_U^2\\
& \leq e^{N(1+\lambda^{\gamma})}\big(\E \|B^*z(t)\|_U^2 +
\E\|B^*\cE^\bot_\lambda z(t)\|_U^2\big).
\end{align*}
Therefore, by the decay estimate
\eqref{eq:decay} we obtain that
\begin{align*}
&  \E \|z(t)\|_H^2
 = \E \|\cE_\lambda z(t)+\cE^\bot_\lambda
z(t)\|_H^2\\
 \le& 2(\E \|\cE_\lambda z(t)\|_H^2+\E \|\cE^\bot_\lambda z(t)\|_H^2)\\
\le&  e^{N(1+\lambda^{\gamma})}\big(\E \|B^*z(t)\|_U^2 +
\E\|B^*\cE^\bot_\lambda z(t)\|_U^2\big)+ \E
\|\cE^\bot_\lambda z(t)\|_H^2\\
\le& 2e^{N(1+\lambda^{\gamma})}\big(\E \|B^*z(t)\|_U^2 + \E
\|\cE^\bot_\lambda z(t)\|_H^2\big)\\
\le& 2e^{N(1+\lambda^{\gamma})}\big(\E \| B^*z(t)\|_U^2 +
e^{(-2\lambda+\tau)(T-t)} \E \|\eta\|_H^2\big)\\
\le&  2e^{N(1+\lambda^{\gamma})}e^{\tau T}\big(\E \|B^* z(t)\|_U^2 +
e^{-2\lambda(T-t)} \E \|\eta\|_H^2\big).
\end{align*}
Note that for each $t\in[0,T)$ and by the Young inequality, we have $\forall\,\epsilon>0$,
$$
N\lambda^{\gamma}=\frac{N}{[\epsilon(T-t)]^\gamma}[\epsilon\lambda(T-t)]^\gamma\leq\epsilon\lambda(T-t)+\left(\frac{N}{[\epsilon(T-t)]^\gamma}\right)
^{\frac{1}{1-\gamma}}.
$$
From these, we have for each $t\in[0,T)$ and $\forall\,\epsilon>0$,
\begin{equation}\label{lemma1-eq2}
\begin{split}
\E \|z(t)\|_H^2\leq& 2e^Ne^{\epsilon\lambda(T-t)}e^{\tau T}e^{\left(\frac{N}{[\epsilon(T-t)]^\gamma}\right)
^{\frac{1}{1-\gamma}}}\big(\E \|B^* z(t)\|_U^2 +
e^{-\lambda(T-t)} \E \|\eta\|_H^2\big)\\
=&2e^Ne^{\tau T}e^{\left(\frac{N}{[\epsilon(T-t)]^\gamma}\right)
^{\frac{1}{1-\gamma}}}\big(e^{\epsilon\lambda(T-t)}\E \|B^* z(t)\|_U^2 +
e^{-(1-\epsilon)\lambda(T-t)} \E \|\eta\|_H^2\big).
\end{split}
\end{equation}

Next, we claim that
\begin{equation}\label{claim}
\E\|z(t)\|_H^2\neq0, ~\text{for a.e.}~t\in[0,T]\Rightarrow \E\|B^*z(t)\|_U^2\neq0, ~\text{for a.e.}~t\in[0,T].
\end{equation}
Indeed, by contradiction, we suppose that $\chi_EB^*z(\cdot;\eta)= 0$, for some measurable sets $E\subset (0,T)$, such that $|E|>0$,
which, along with \eqref{lemma:main-ob} in Lemma \ref{lemma:main}, we have
$$
\E\|z(0)\|_H^2=0.
$$
This shows a contradiction. By the energy estimate for \eqref{eq:main} and assumption ${\bf (B)}$, there exists a positive constant $C_0$ such that the solution of \eqref{eq:main} satisfies
$$
\E\|B^*z(t)\|_U^2\leq \E\|z(t)\|_H^2\leq C_0\E\|\eta\|^2~\text{for a.e.}~t\in[0,T],
$$
which, along with $\eta\neq0$ and the above claim, we can choose
$$
\lambda=\frac{1}{T-t}\ln\frac{C_0\E\|\eta\|_H^2}{\E\|B^*z(t)\|_U^2}>0.
$$
This together with \eqref{lemma1-eq2} implies
\begin{equation*}
\begin{split}
\E \|z(t)\|_H^2\leq& 2e^Ne^{\tau T}e^{\left(\frac{N}{[\epsilon(T-t)]^\gamma}\right)
^{\frac{1}{1-\gamma}}}\left(C_0\E\|\eta\|_H^2\right)^\epsilon\left(\E\|B^*z(t)\|_U^2\right)^{1-\epsilon}\\
&+\left(\E\|B^*z(t)\|_U^2\right)^{1-\epsilon}\\
\leq&4C_0e^Ne^{\tau T}e^{\left(\frac{N}{[\epsilon(T-t)]^\gamma}\right)
^{\frac{1}{1-\gamma}}}\left(\E\|\eta\|_H^2\right)^\epsilon\left(\E\|B^*z(t)\|_U^2\right)^{1-\epsilon}\\
\leq&e^{C_2\left[1+\left(\frac{1}{\epsilon(T-t)}\right)^{\frac{\gamma}{1-\gamma}}\right]}\left(\E\|\eta\|_H^2\right)^\epsilon
\left(\E\|B^*z(t)\|_U^2\right)^{1-\epsilon},
\end{split}
\end{equation*}
where $C_2>0$. This is the desired estimate \eqref{lemma1-eq1} and the proof is completed.
\end{proof}

\begin{proposition}\label{po-1}
Suppose the assumptions ${\bf(H)}$ and ${\bf(B)}$ hold. Given any $\eta\in L^2_{\mathcal{F}_T}(\Omega;H)$, then
    we have for each $t\in [0,T)$, $\epsilon>0$ such that the solution of \eqref{eq:main} satisfies
  \begin{equation}
    \label{po1-eq1}
    \E \|z(t ; \eta)\|_H^2 \leq C_\gamma(t,\epsilon) \E\|B^*z(t;\eta)\|_U^2 +\epsilon\E \|\eta\|_H^2,
  \end{equation}
  where
  $$
  C_\gamma(t,\epsilon)=e^{C_3\left[1+\left(\frac{1}{T-t}\right)^{\frac{\gamma}{1-\gamma}}\right]}e^{\left[\frac{C_3}{T-t}\ln\left(e+\frac{1}{\epsilon}
  \right)\right]^\gamma},~\text{for some}~C_3>0.
  $$
\end{proposition}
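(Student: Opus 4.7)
The plan is to convert the multiplicative interpolation inequality of Lemma \ref{lemma-1} into the additive form \eqref{po1-eq1} via Young's inequality, and then choose the interpolation parameter $\theta \in (0,1)$ as a function of $\epsilon$ and $T-t$ so that the resulting prefactor matches the explicit shape of $C_\gamma(t,\epsilon)$. Writing $K(\theta) := e^{C[1+(1/(\theta(T-t)))^{\gamma/(1-\gamma)}]}$ for the constant supplied by \eqref{lemma1-eq1}, I would apply the scaled Young inequality
\[
a^{1-\theta}b^{\theta} \;\le\; (1-\theta)\,\mu\,a \;+\; \theta\,\mu^{-(1-\theta)/\theta}\,b, \qquad \mu>0,
\]
with $a = \E\|B^{*}z(t;\eta)\|_{U}^{2}$ and $b = \E\|\eta\|_{H}^{2}$, and then select $\mu = (\theta K(\theta)/\epsilon)^{\theta/(1-\theta)}$. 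This choice forces the coefficient of $b$ to equal $\epsilon$ exactly, leaving
\[
K(\theta)\,a^{1-\theta}b^{\theta} \;\le\; \Phi(\theta,\epsilon)\,a + \epsilon\,b,\qquad \Phi(\theta,\epsilon) := (1-\theta)\,\theta^{\theta/(1-\theta)}\,K(\theta)^{1/(1-\theta)}\,\epsilon^{-\theta/(1-\theta)}.
\]

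It then remains to show $\Phi(\theta,\epsilon) \le C_\gamma(t,\epsilon)$ for a suitable $\theta$. I would restrict to $\theta \in (0,1/2]$, where $1/(1-\theta) \le 2$, and discard the nonpositive contributions $\ln(1-\theta), \ln\theta \le 0$. Writing $\sigma := \gamma/(1-\gamma)$ and $L := \ln(e+1/\epsilon)$, this yields
\[
\ln \Phi(\theta,\epsilon) \;\le\; 2C + \frac{2C}{(\theta(T-t))^{\sigma}} + 2\theta L,
\]
so the problem reduces to minimizing $f(\theta) := (\theta(T-t))^{-\sigma} + \theta L$. An elementary calculation gives the unconstrained critical point $\theta_{\ast} = [\sigma/(L(T-t)^{\sigma})]^{1-\gamma}$, at which $f(\theta_{\ast}) \lesssim_{\gamma} (L/(T-t))^{\gamma}$. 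When $\theta_{\ast} \le 1/2$---equivalently when $L(T-t)^{\sigma}$ exceeds a $\gamma$-dependent constant---I would choose $\theta = \theta_{\ast}$, producing the $\epsilon$-dependent factor $\exp\{[C_{3}L/(T-t)]^{\gamma}\}$ in $C_\gamma(t,\epsilon)$. In the complementary regime I would set $\theta = 1/2$: then $L$ is controlled by a $\gamma$-dependent multiple of $(T-t)^{-\sigma}$, so $f(1/2) \lesssim (T-t)^{-\sigma}$, producing the $\epsilon$-independent factor $\exp\{C_{3}[1+(T-t)^{-\sigma}]\}$. Enlarging $C_{3}$ once, the two regimes are both dominated by $C_\gamma(t,\epsilon)$.

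The main obstacle is really just the bookkeeping of this case split: picking a single constant $C_{3}$ that uniformly handles both the small- and large-$L$ regimes, and checking that the transition threshold between the two choices of $\theta$ is consistent with the bounds claimed on each side. No further probabilistic input is needed beyond Lemma \ref{lemma-1}, since the remaining work is an optimization of an elementary two-parameter function. Note that the presence of $\ln(e+1/\epsilon)$ rather than $\ln(1/\epsilon)$ in the statement is precisely what ensures $L \ge 1$ and allows the large-$\epsilon$ regime to be absorbed into an $\epsilon$-independent bound.
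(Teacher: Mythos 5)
Your proposal is correct and follows essentially the same route as the paper: both start from Lemma \ref{lemma-1}, convert the multiplicative interpolation into the additive form via the weighted Young inequality calibrated so that the $\E\|\eta\|_H^2$ coefficient is exactly $\epsilon$, and then choose the interpolation parameter of the form $\bigl(\text{const}/(L(T-t)^{\gamma/(1-\gamma)})\bigr)^{1-\gamma}$ with $L=\ln(e+1/\epsilon)$ to produce the stated $C_\gamma(t,\epsilon)$. The only cosmetic difference is that the paper works with $\beta=\theta/(1-\theta)\in(0,\infty)$ and folds your two regimes into a single choice $\beta=\bigl(b_0/(L+a_0)\bigr)^{1-\gamma}$, whereas you keep $\theta\in(0,1/2]$ and handle the small-$L$ regime by a separate case; both are sound.
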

\begin{proof}
Set $z = z(\cdot;\eta)$. From Lemma \ref{lemma-1}, we can rewrite \eqref{lemma1-eq1} in the following way:
$$
\E \|z(t)\|_H^2\leq \left(e^{\frac{C}{1-\theta}\left[1+\left(\frac{1}{\theta(T-t)}  \right)^\frac{\gamma}{1-\gamma}\right]} \E\|B^*z(t)\|_U^2\right)^{1-\theta} \left(\E \|\eta\|_H^2\right)^\theta.
$$
Now, by the following inequality; see \cite[Step 2, page 1144]{buffe2018spectral}: for any $a,b,c>0$ and $\theta\in(0,1)$,
$$
a\leq b^\theta c^{1-\theta}\Leftrightarrow a\leq\epsilon b+(1-\theta)\theta^{\frac{\theta}{1-\theta}}\epsilon^{-\frac{\theta}{1-\theta}}c,\,\forall\epsilon>0,
$$
choosing $a=\E \|z(t)\|_H^2$, $b=\E \|\eta\|_H^2$, $c=e^{\frac{C}{1-\theta}\left[1+\left(\frac{1}{\theta(T-t)}  \right)^\frac{\gamma}{1-\gamma}\right]} \E\|B^*z(t)\|_U^2$, we have
\begin{equation*}
\begin{split}
\E \|z(t)\|_H^2\leq& \varepsilon\E \|\eta\|_H^2+(1-\theta)\theta^{\frac{\theta}{1-\theta}}\epsilon^{-\frac{\theta}{1-\theta}}e^{\frac{C}{1-\theta}\left[1+\left(\frac{1}{\theta(T-t)}  \right)^\frac{\gamma}{1-\gamma}\right]} \E\|B^*z(t)\|_U^2\\
\leq& \varepsilon\E \|\eta\|_H^2+\epsilon^{-\frac{\theta}{1-\theta}}e^{\frac{C}{1-\theta}\left[1+\left(\frac{1}{\theta(T-t)}  \right)^\frac{\gamma}{1-\gamma}\right]} \E\|B^*z(t)\|_U^2.
\end{split}
\end{equation*}
By denoting $\beta=\frac{\theta}{1-\theta}$, it holds
$$
\E \|z(t)\|_H^2\leq \varepsilon\E \|\eta\|_H^2+\epsilon^{-\beta}e^{C(1+\beta)\left[1+\left(\frac{1+\beta}{\beta(T-t)}\right)^{\frac{\gamma}{1-\gamma}}\right]}\E\|B^*z(t)\|_U^2.
$$
Now, denote $a_0=K\left[1+\frac{1}{T-t}\right]^{\frac{\gamma}{1-\gamma}}$, $b_0=K\left(\frac{1}{T-t}\right)^{\frac{\gamma}{1-\gamma}}$, for some constant $K>0$, and note that
$$
\epsilon^{-\beta}e^{C(1+\beta)\left[1+\left(\frac{1+\beta}{\beta(T-t)}\right)^{\frac{\gamma}{1-\gamma}}\right]}\leq e^{a_0+\beta\left[\ln\left(e+\frac{1}{\epsilon}\right)+a_0  \right]+\left(\frac{1}{\beta}\right)^{\frac{\gamma}{1-\gamma}}b_0}.
$$
Choose $\beta=\left(\frac{b_0}{\ln\left(e+\frac{1}{\epsilon}\right)+a_0}\right)^{1-\gamma}$. Then for some constants $c_0, c_1>0$, we have
\begin{equation*}
\begin{split}
e^{a_0+\beta\left[\ln\left(e+\frac{1}{\epsilon}\right)+a_0  \right]+\left(\frac{1}{\beta}\right)^{\frac{\gamma}{1-\gamma}}b_0}\leq&
e^{c_0a_0+c_0\left[\ln\left(e+\frac{1}{\epsilon}\right)+a_0  \right]^\gamma b_0^{1-\gamma}}\\
\leq&e^{c_1a_0+c_1\left[\ln\left(e+\frac{1}{\epsilon}\right)  \right]^\gamma b_0^{1-\gamma}}.
\end{split}
\end{equation*}
Therefore, we obtain the desired inequality:
$$
\E \|z(t)\|_H^2 \leq e^{C_3\left[1+\left(\frac{1}{T-t}\right)^{\frac{\gamma}{1-\gamma}}\right]}e^{\left[\frac{C_3}{T-t}\ln\left(e+\frac{1}{\epsilon}
  \right)\right]^\gamma} \E\|B^*z(t;\eta)\|_U^2 +\epsilon\E \|\eta\|_H^2,
$$
where $C_3>0$. The proof is completed.
\end{proof}

\section{Impulse approximate controllability}
In this section, we shall show that the system \eqref{eq:main-for} is impulse approximate controllability.

Let $\tilde{T}\in(0,T)$, for any $l,\epsilon>0$, we define a functional on $L^2_{\mathcal{F}_T}(\Omega;H)$:
\begin{equation}
  \label{eq:J}
  \cJ(\eta) = \frac{l}{2} \E \|B^*z(T-\tilde{T};\eta)\|_U^2 + \frac{\epsilon}{2} \E \|\eta\|_H^2 - \E\langle y_0, z(0;\eta)\rangle_H,
\end{equation}
where $y_0\in L^2_{\mathcal{F}_0}(\Omega;H)$ is the initial state of \eqref{eq:main-for} and $z(\cdot;\eta)$ is the solution of \eqref{eq:main}.

First, we prove the existence of the minimizer of $\cJ$ defined in \eqref{eq:J}.
\begin{lemma}
  \label{lem:J_min}
  Suppose $y_0\in L^2_{\cF_0}(\Omega;H)$. Then there exists $\eta^\ast\in
  L^2_{\cF_T}(\Omega;H)$ such that
  $$
    \cJ(\eta^\ast) = \min_{\eta\in L^2_{\cF_T}(\Omega;H)} \cJ(\eta).
  $$
\end{lemma}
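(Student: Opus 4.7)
The plan is to apply the direct method of the calculus of variations on the Hilbert space $L^2_{\cF_T}(\Omega;H)$. First I would check the three structural properties of $\cJ$ that make the method work. The quadratic terms $\frac{l}{2}\E\|B^*z(T-\tilde T;\eta)\|_U^2$ and $\frac{\epsilon}{2}\E\|\eta\|_H^2$ are convex and non-negative, and the cross term $-\E\langle y_0,z(0;\eta)\rangle_H$ is linear in $\eta$ because $\eta\mapsto z(\cdot;\eta)$ is linear (this is immediate from the linearity of the backward equation \eqref{eq:main}). The well-posedness of \eqref{eq:main} in $L^2_{\mathbb F}(\Omega;C([0,T];H))$, combined with assumption ${\bf(B)}$ bounding $B^*$, gives continuity of $\eta\mapsto z(0;\eta)$ and $\eta\mapsto B^*z(T-\tilde T;\eta)$, hence continuity of $\cJ$. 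Because $\epsilon>0$, the term $\frac{\epsilon}{2}\E\|\eta\|_H^2$ makes $\cJ$ strongly (hence strictly) convex, which will give uniqueness as a byproduct.

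Next I would establish coercivity by the elementary estimate
\begin{equation*}
\cJ(\eta)\ge \frac{\epsilon}{2}\E\|\eta\|_H^2-\E\langle y_0,z(0;\eta)\rangle_H\ge \frac{\epsilon}{2}\E\|\eta\|_H^2-C\|y_0\|_{L^2_{\cF_0}(\Omega;H)}\,\|\eta\|_{L^2_{\cF_T}(\Omega;H)},
\end{equation*}
where the continuous dependence $\|z(0;\eta)\|\le C\|\eta\|$ comes from the standard energy estimate for \eqref{eq:main}. This shows $\cJ(\eta)\to+\infty$ as $\|\eta\|\to\infty$. Since $\cJ(0)=0$, the infimum $m:=\inf\cJ$ is finite and non-positive. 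Taking a minimizing sequence $\{\eta_n\}$, coercivity forces it to be bounded in $L^2_{\cF_T}(\Omega;H)$, so by reflexivity I can extract a subsequence $\eta_n\rightharpoonup\eta^\ast$ weakly.

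Finally I would pass to the limit using weak lower semicontinuity. The functional $\cJ$ is convex and strongly continuous, hence weakly lower semicontinuous on the Hilbert space. Therefore $\cJ(\eta^\ast)\le\liminf_n\cJ(\eta_n)=m$, which forces $\cJ(\eta^\ast)=m$, completing the existence proof. The uniqueness, though not explicitly claimed in the statement, follows from strict convexity and would be worth noting in passing. I do not expect a serious obstacle: the only point that needs mild care is confirming the weak continuity of the linear map $\eta\mapsto z(0;\eta)$ and the weak lower semicontinuity of $\eta\mapsto\E\|B^*z(T-\tilde T;\eta)\|_U^2$, both of which reduce to linearity plus the continuity estimates from the well-posedness of \eqref{eq:main}.
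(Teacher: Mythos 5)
Your proposal is correct, and it follows the same overall strategy as the paper (the direct method: convexity, continuity, coercivity, then weak lower semicontinuity on the Hilbert space $L^2_{\cF_T}(\Omega;H)$). The one genuine difference is in how coercivity is obtained. You use the crude but sufficient bound
\begin{equation*}
\cJ(\eta)\;\ge\;\frac{\epsilon}{2}\,\E\|\eta\|_H^2-\bigl(\E\|y_0\|_H^2\bigr)^{1/2}\bigl(\E\|z(0;\eta)\|_H^2\bigr)^{1/2}\;\ge\;\frac{\epsilon}{2}\,\E\|\eta\|_H^2-C\bigl(\E\|y_0\|_H^2\bigr)^{1/2}\bigl(\E\|\eta\|_H^2\bigr)^{1/2},
\end{equation*}
so the quadratic $\epsilon$-term alone beats the linear term and no further structure is needed. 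The paper instead normalizes a sequence with $\E\|\eta_n\|_H^2\to\infty$, splits into two cases according to whether $\liminf_n\E\|B^*z(T-\tilde T;\wt\eta_n)\|_U^2$ vanishes, and in the degenerate case invokes the observability inequality of Lemma \ref{lemma:main} to conclude $z(0;\wt\eta)=0$ and recover the lower bound $\epsilon/2$. Your route is shorter and avoids Lemma \ref{lemma:main} entirely; the price is that it leans essentially on $\epsilon>0$, whereas the paper's case analysis is the kind of argument that would survive taking $\epsilon=0$ (where coercivity must come from the observation term). Since the lemma is stated for fixed $\epsilon>0$, your simplification is legitimate. Your remaining steps (weak compactness of the minimizing sequence, weak lower semicontinuity of a convex continuous functional, weak continuity of the bounded linear map $\eta\mapsto\E\langle y_0,z(0;\eta)\rangle_H$) are standard and correctly justified, and the observation that strict convexity from the $\epsilon$-term yields uniqueness is a valid bonus not claimed in the statement.
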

\begin{proof}
 One can easily check that the $\cJ(\cdot)$ is convex and continuous.

 Now, we prove the coercivity, let $\{\eta_n\}\subseteq L^2_{\cF_T}(\Omega;H)$ such that $$\E\|\eta_n\|^2_H\to \infty ~\text{as}~ n\to\infty,$$ and let
$$
  \wt{\eta}_n = \eta_n/ \E \|\eta_n\|_H^2,
$$
so that $\E \|\wt{\eta}_n\|_H^2 = 1$. Then
\begin{align*}
  \frac{\cJ(\eta_n)
  }{\E \|\eta_n\|_H^2} = \frac{l}{2}\E \|\eta_n\|_H^2  \E \|B^*z(T-\tilde{T};\wt{\eta}_n)\|_U^2 + \frac{\epsilon}{2} - \E\langle y_0, z(0;\wt{\eta}_n)\rangle_H.
\end{align*}
Note that
$$
  |\E\langle y_0, z(0;\wt{\eta}_n)\rangle_H| \leq (\E \|y_0\|_H^2)^{1/2} (\E
  \|z(0,\wt{\eta}_n)\|_H^2)^{1/2} \leq C(\E \|y_0\|_H^2)^{1/2}.
$$

If $\liminf_{n\to\infty} \E \|B^*z(T-\tilde{T};\wt{\eta}_n)\|_U^2>0$, we have
\begin{equation}
  \label{eq:coercivity}
  \cJ(\eta_n)
   \to \infty\quad\text{as}\quad \E\|\eta_n\|_H^2\to\infty,
\end{equation}
which implies the coercivity of $\cJ$.

If $\liminf_{n\to\infty} \E \|B^*z(T-\tilde{T};\wt{\eta}_n)\|_U^2 =0$, then it
follows from \eqref{lemma:main-ob} and the uniqueness of solution for \eqref{eq:main}, that $\wt{\eta}_n$ is bounded in $L^2_{\cF_T}(\Omega;H)$. Thus, we can extract a subsequence $\{\wt{\eta}_{n_j}\}\subseteq
\{\wt{\eta}_n\}$ such that $\wt{\eta}_{n_j}\rightharpoonup \wt{\eta}$ weakly in
$H$ and $z(\cdot; \wt{\eta}_{n_j})\rightharpoonup z(\cdot;\wt{\eta})$ weakly in
$C([0,T];H), ~\bP$-a.s. Moreover, by lower
semi-continuity we obtain
\[
   \E \|B^*z(T-\tilde{T};\wt{\eta})\|_U^2 \le \liminf_{j\to\infty} \E
  \|B^*z(T-\tilde{T}; \wt{\eta}_{n_j})\|_U^2 = 0.
\]
Then by \eqref{lemma:main-ob}, we get $z(0;\wt{\eta})=0,~\bP$-a.s., and thus
\begin{equation*}
\begin{split}
  \liminf_{n\to\infty}\frac{\cJ(\eta_n)
  }{\E \|\eta_n\|_H^2}
   \geq \liminf_{j\to\infty}\frac{\cJ(\eta_{n_j})
    }{\E \|\eta_{n_j}\|_H^2}
   \geq& \liminf_{j\to \infty} [\frac{\epsilon}{2} - \E\langle y_0, z(0;\wt{\eta}_{n_j})\rangle_H]\\
   =&\frac{\epsilon}{2} - \E\langle y_0, z(0,\wt{\eta})\rangle_H =\frac{\epsilon}{2},
   \end{split}
\end{equation*}
which implies~\eqref{eq:coercivity}, and so $\cJ$ is coercive.

To sum up, we showed that $\cJ$ is convex, continuous, and coercive, and thus
the minimizer of $\cJ$ exists.
\end{proof}
Next, we present several equivalent inequalities.

\begin{theorem}\label{th-1}
Let $0<\tilde{T}<T\leq2\tilde{T}$ and $l,\epsilon>0$. For any $\eta\in L^2_{\cF_T}(\Omega;H)$, the following two statements are equivalent.

$(i)$: $\forall\,y_0\in L^2_{\cF_0}(\Omega;H)$, there exists a control $u\in L^2_{\cF_{\tilde{T}}}(\Omega;U)$ such that the solution of \eqref{eq:main-for} satisfies
$$
\frac{1}{l}\E\|u\|_U^2+\frac{1}{\epsilon}\E\|y(T)\|_H^2\leq\E\|y_0\|_H^2.
$$

$(ii)$: The solution of \eqref{eq:main} satisfies
$$
\E\|z(0)\|_H^2\leq l\E\|B^*z(T-\tilde{T};\eta)\|_U^2+\epsilon\E\|\eta\|_H^2.
$$
\end{theorem}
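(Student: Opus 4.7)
The plan is a Fenchel--Rockafellar-type dualization built around the functional $\cJ$ from \eqref{eq:J}: statement (ii) is precisely the observability/coercivity needed for $\cJ$ to have a minimizer (supplied by Lemma \ref{lem:J_min}), whose Euler--Lagrange equation yields the control witnessing (i). Conversely, the control given by (i), tested against the backward equation, produces (ii). The bridge between the two is a stochastic integration-by-parts identity relating the forward controlled system \eqref{eq:main-for} and the backward system \eqref{eq:main}.

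First I would derive this identity. Applying Ito's formula to $\langle y(t), z(t;\eta)\rangle_H$ on $[0,\tilde T)$ and $(\tilde T, T]$ separately, the self-adjointness of $A$ together with the scalar-valued nature of $F$ cancels the drift contributions and lets the Ito cross-correction absorb the $-F(t)Z$ drift in \eqref{eq:main}. Taking expectations and accounting for the impulse $y(\tilde T)=y(\tilde T^-)+Bu$ gives
$$
\E\langle y(T),\eta\rangle_H = \E\langle y_0, z(0;\eta)\rangle_H + \E\langle u, B^* z(\tilde T;\eta)\rangle_U.
$$
To align this with the penalty term of $\cJ$, which uses $B^*z(T-\tilde T;\eta)$, I would use the tower property combined with the constraint that the constructed control $u$ lie in $L^2_{\cF_{T-\tilde T}}(\Omega;U)$. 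The hypothesis $T\leq 2\tilde T$ ensures $T-\tilde T\leq \tilde T$, making any such $u$ admissible as an impulse at $\tilde T$; this is exactly the adaptedness obstruction flagged in the remarks following Theorem \ref{thm:main}.

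For $(\mathrm{ii})\Rightarrow(\mathrm{i})$: Lemma \ref{lem:J_min} produces a minimizer $\eta^\ast$ of $\cJ$ satisfying the Euler--Lagrange equation
$$
l\,\E\langle B^*z(T-\tilde T;\eta^\ast), B^*z(T-\tilde T;\eta)\rangle_U + \epsilon\,\E\langle \eta^\ast,\eta\rangle_H = \E\langle y_0, z(0;\eta)\rangle_H\quad\text{for all}~\eta\in L^2_{\cF_T}(\Omega;H).
$$
I set $u\triangleq l\,B^*z(T-\tilde T;\eta^\ast)\in L^2_{\cF_{T-\tilde T}}(\Omega;U)$ and combine the Euler--Lagrange identity with the reworked duality identity to express $y(T)$ in terms of $\eta^\ast$. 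Specializing to $\eta=\eta^\ast$ and applying Cauchy--Schwarz to $\E\langle y_0, z(0;\eta^\ast)\rangle_H$, together with the observability (ii) and Young's inequality, yields $\frac{1}{l}\E\|u\|_U^2 + \frac{1}{\epsilon}\E\|y(T)\|_H^2 \leq \E\|y_0\|_H^2$. For $(\mathrm{i})\Rightarrow(\mathrm{ii})$: fix $\eta\in L^2_{\cF_T}(\Omega;H)$, apply (i) with initial datum $y_0\triangleq z(0;\eta)$ to obtain an admissible $u$, and rearrange the reworked duality identity into an expression for $\E\|z(0;\eta)\|_H^2$; Cauchy--Schwarz, Young's inequality, and the control bound from (i) then deliver (ii).

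The principal obstacle is the passage from the natural duality, which pairs $u$ with $B^*z(\tilde T;\eta)$, to the form pairing $u$ with $B^*z(T-\tilde T;\eta)$ used in $\cJ$. This is exactly the stochastic-specific difficulty highlighted in the introduction: adaptedness prevents the free interchange of forward and backward equations available in the deterministic case, and the restriction $T\leq 2\tilde T$ is the structural price paid to preserve the equivalence.
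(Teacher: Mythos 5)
Your overall architecture coincides with the paper's: the functional $\cJ$ from \eqref{eq:J}, Lemma \ref{lem:J_min} for the minimizer, the Euler--Lagrange identity producing the candidate control $u=l\,B^*z(T-\tilde{T};\eta^\ast)$, and the substitution $y_0=z(0;\eta)$ for the direction $(i)\Rightarrow(ii)$. The genuine gap is precisely the step you yourself flag as the ``principal obstacle.'' It\^{o}'s formula pairs the impulse with the adjoint state evaluated at the impulse time, i.e.\ it produces $\E\langle u,B^*z(\tilde{T};\eta)\rangle_U$, whereas $\cJ$ and statement $(ii)$ involve $B^*z(T-\tilde{T};\eta)$. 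Your proposed bridge via the tower property does not close this gap: for $u\in L^2_{\cF_{T-\tilde{T}}}(\Omega;U)$ one gets $\E\langle u,B^*z(\tilde{T})\rangle_U=\E\langle u,B^*\E[z(\tilde{T})\mid\cF_{T-\tilde{T}}]\rangle_U$, and $\E[z(\tilde{T})\mid\cF_{T-\tilde{T}}]\neq z(T-\tilde{T})$ in general. Already in the degenerate case $F\equiv0$, $Z\equiv0$ one has $z(t)=\E[S(T-t)\eta\mid\cF_t]$, so $\E[z(\tilde{T})\mid\cF_{T-\tilde{T}}]=\E[S(T-\tilde{T})\eta\mid\cF_{T-\tilde{T}}]$ while $z(T-\tilde{T})=\E[S(\tilde{T})\eta\mid\cF_{T-\tilde{T}}]$; these coincide only when $T=2\tilde{T}$. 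Worse, in the direction $(i)\Rightarrow(ii)$ the admissible control is only known to lie in $L^2_{\cF_{\tilde{T}}}(\Omega;U)$, so the conditioning device is not even available there.

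For comparison, the paper does not attempt your repair: it asserts the identity \eqref{th1-1}, with $B^*z(T-\tilde{T})$ already on the right-hand side, as the direct output of pairing $y(T-t)$ with $z(T-t)$ and integrating over $(0,T)$. Your more careful bookkeeping --- the jump of $t\mapsto y(T-t)$ occurs at $t=T-\tilde{T}$, where the second factor is $z(T-t)=z(\tilde{T})$ --- is the honest computation, and it shows that the time argument $T-\tilde{T}$ is not obtained for free from any choice of pairing. Either the functional $\cJ$, statement $(ii)$ and the constructed control should be reformulated with $z(\tilde{T})$ (note $z(\tilde{T})$ is already $\cF_{\tilde{T}}$-measurable, so adaptedness of the impulse would not then force $T\le2\tilde{T}$), or a substantive additional argument relating $z(\tilde{T})$ and $z(T-\tilde{T})$ must be supplied. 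As written, the proposal identifies the right difficulty but does not resolve it, so the equivalence is not established.
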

\begin{proof}
Set $z = z(\cdot;\eta)$. We first prove $(i)\Rightarrow(ii)$.

For system \eqref{eq:main-for}, system \eqref{eq:main} replaced $t$ by $T-t$ and \Ito formulate, we have
\begin{equation*}
\begin{split}
d\langle y(T-t),z(T-t)\rangle_H=&\langle dy(T-t),z(T-t)\rangle_H+\langle y(T-t),dz(T-t)\rangle_H\\
&+\langle dy(T-t),dz(T-t)\rangle_H.
\end{split}
\end{equation*}
Then integrating from $(0,T)$ and taking the expectation, we obtain
\begin{equation}\label{th1-1}
\begin{split}
\E\langle y_0,z(0)\rangle_H-\E\langle y(T),\eta\rangle_H=\E\langle u,B^*z(T-\tilde{T})\rangle_U.
\end{split}
\end{equation}
By Cauchy-Schwarz's inequality and using the inequality in $(i)$, one can deduce that
\begin{equation*}
\begin{split}
&\E\langle y_0,z(0)\rangle_H\\
\leq&\left(\E\|u\|_U^2\right)^{\frac{1}{2}}\left(\E\|B^*z(T-\tilde{T})\|_U^2\right)^{\frac{1}{2}}
+\left(\E\|y(T)\|_H^2\right)^{\frac{1}{2}}\left(\E\|\eta\|_H^2\right)^{\frac{1}{2}}\\
\leq&\frac{1}{2l}\E\|u\|_U^2+\frac{1}{2\epsilon}\E\|y(T)\|_H^2+\frac{l}{2}\E\|B^*z(T-\tilde{T})\|_U^2+\frac{\epsilon}{2}\E\|\eta\|_H^2\\
\leq&\frac{1}{2}\E\|y_0\|_H^2+\frac{1}{2}\left[l\E\|B^*z(T-\tilde{T})\|_U^2+\epsilon\E\|\eta\|_H^2\right],
\end{split}
\end{equation*}
which, along with choosing $y_0=z(0)$, it implies
$$
\E\|z(0)\|_H^2\leq l\E\|B^*z(T-\tilde{T};\eta)\|_U^2+\epsilon\E\|\eta\|_H^2.
$$

Next, we prove $(ii)\Rightarrow(i)$.

From Lemma \ref{lem:J_min}, we know that the functional $\cJ$ has a minimizer $\eta^\ast\in L^2_{\cF_T}(\Omega;H)$, i.e.,
$$
    \cJ(\eta^\ast) = \min_{\eta\in L^2_{\cF_T}(\Omega;H)} \cJ(\eta).
  $$
Consider the following backward systems:
\begin{equation*}
\left\{
\begin{array}{ll}
d \tilde{z} = -A \tilde{z}dt - F(t)\tilde{Z} dt + \tilde{Z} dW(t), & t\in(0,T),   \\[2mm]
\tilde{z}(T) =\eta^*,
\end{array}
\right.
\end{equation*}
and
\begin{equation*}
\left\{
\begin{array}{ll}
d h = -A hdt - F(t)H dt + H dW(t), & t\in(0,T),   \\[2mm]
h(T) =h_T,
\end{array}
\right.
\end{equation*}
where $\eta^*$ is the minimizer of the functional $\cJ$ and $h_T\in L^2_{\mathcal{F}_T}(\Omega;H)$. Noting that
$$
\cJ'(\eta^*)h_T=0,~\text{for any}~h_T\in L^2_{\mathcal{F}_T}(\Omega;H),
$$
we have for any $h_T\in L^2_{\mathcal{F}_T}(\Omega;H)$,
\begin{equation}
\label{th1-2}
l\E\langle B^*\tilde{z}(T-\tilde{T}),B^*h(T-\tilde{T})\rangle_U+\epsilon\E\langle\eta^*,h_T\rangle_H-\E\langle y_0,h(0)\rangle_H=0.
\end{equation}
On one hand, similar to \eqref{th1-1}, we obtain
\begin{equation}\label{th1-3}
\begin{split}
\E\langle u,B^*h(T-\tilde{T})\rangle_U-\E\langle y_0,h(0)\rangle_H+\E\langle y(T),h_T\rangle_H=0.
\end{split}
\end{equation}
Noting that $0<\tilde{T}<T\leq2\tilde{T}$, by choosing $$u=lB^*\tilde{z}(T-\tilde{T})\in L^2_{\mathcal{F}_{T-\tilde{T}}}(\Omega;U)\subset L^2_{\mathcal{F}_{\tilde{T}}}(\Omega;U)$$ and along with \eqref{th1-2} and \eqref{th1-3}, it implies that the solution of \eqref{eq:main-for} satisfies:
$$
y(T)=\epsilon\eta^*.
$$
Furthermore, after calculation, we have
\begin{equation}\label{th1-4}
\begin{split}
l\E\|B^*\tilde{z}(T-\tilde{T})\|_U^2+\epsilon\E\|\eta^*\|_H^2=\frac{1}{l}\E\|u\|_U^2+\frac{1}{\epsilon}\E\|y(T)\|_H^2.
\end{split}
\end{equation}

On the other hand, taking $h(t)=\tilde{z}(t)$ for any $t\in[0,T]$, by \eqref{th1-2}, we get
$$
l\E\|B^*\tilde{z}(T-\tilde{T})\|_U^2+\epsilon\E\|\eta^*\|_H^2-\E\langle y_0,\tilde{z}(0)\rangle_H=0,
$$
which, along with Cauchy-Schwarz's inequality and the inequality of $(ii)$, it holds
\begin{equation*}
\begin{split}
&l\E\|B^*\tilde{z}(T-\tilde{T})\|_U^2+\epsilon\E\|\eta^*\|_H^2\leq\left(\E\|y_0\|_H^2\right)^{\frac{1}{2}}\left(\E\|\tilde{z}(0)\|_H^2\right)^{\frac{1}{2}}\\
\leq&\left(\E\|y_0\|_H^2\right)^{\frac{1}{2}}\left[l\E\|B^*\tilde{z}(T-\tilde{T})\|_U^2+\epsilon\E\|\eta^*\|_H^2\right]^{\frac{1}{2}}.
\end{split}
\end{equation*}
This implies
$$
l\E\|B^*\tilde{z}(T-\tilde{T})\|_U^2+\epsilon\E\|\eta^*\|_H^2\leq\E\|y_0\|_H^2,
$$
which, together with \eqref{th1-4}, it holds
$$
\frac{1}{l}\E\|u\|_U^2+\frac{1}{\epsilon}\E\|y(T)\|_H^2\leq\E\|y_0\|_H^2.
$$
The proof is completed.
\end{proof}

Now, we are ready to prove Theorem \ref{thm:main}.
\begin{proof}[proof of Theorem \ref{thm:main}]
Set $z=z(\cdot;\eta)$. From Proposition \ref{po-1} and the energy estimate of solution for \eqref{eq:main}, we have that for some constant $C>0$, the solution of \eqref{eq:main} satisfies
\begin{equation*}
\begin{split}
    \E \|z(0)\|_H^2\leq& C\E \|z(T-\tilde{T})\|_H^2\\
    \leq&C_\gamma(T-\tilde{T},\epsilon)C \E\|B^*z(T-\tilde{T})\|_U^2 +\epsilon\E \|\eta\|_H^2.
\end{split}
\end{equation*}
Now, by Theorem \ref{th-1} with $l=C_\gamma(T-\tilde{T},\epsilon)C$ and along with the above inequality, we have that the solution of \eqref{eq:main-for} and the control $u$ satisfies
$$
\frac{1}{l}\E\|u\|_U^2+\frac{1}{\epsilon}\E\|y(T)\|_H^2\leq\E\|y_0\|_H^2.
$$
This implies the desired estimates \eqref{thm:main-ob} and \eqref{thm:main-u}. The proof is completed.
\end{proof}
\section{Norm optimal control problem}
In this section, based on Theorem \ref{thm:main}, we shall prove Theorem \ref{thm:main2}.
\begin{proof}[proof of Theorem \ref{thm:main2}] At first, by Theorem \ref{thm:main}, we see that problem ${\bf(NP)}_{y_0}^T$ has an admissible control. So there exist a sequence $\{ u_n^* \}_{n\geq1}\subseteq L^2_{\cF_{T}}(\Omega;U)$ holding
\begin{equation}\label{proposition:TN-exist-eq2}
\E\|u_n^*\|_U^2\leq N(T,y_0)+\frac{1}{n}~\text{and}~\E\|y(T;y_0,u_n^*)\|_H^2\leq\epsilon\E\|y_0\|_H^2,
\end{equation}
where $y(\cdot;y_0,u_n^*)$ is the solution of \eqref{eq:main-for} with the control $u_n^*$.
By the first inequality in (\ref{proposition:TN-exist-eq2}), there exists a subsequence of $\{ u_n^* \}_{n\geq1}$, denoted in the same manner, and
$u^*\in L^2_{\cF_{T}}(\Omega;U)$ such that
\begin{equation}
\label{proposition:TN-exist-lim2}
u_n^*\rightarrow u^*\quad\ \text{weakly in}~\,L^2_{\cF_{T}}(\Omega;U)\quad\ \text{as}~\,n\rightarrow\infty.
\end{equation}
and
\begin{equation}
\label{proposition:TN-exist-lim28}
\E\|u^*\|_U^2\leq\liminf\limits_{n\rightarrow\infty}\E\|u_n^*\|_U^2\leq N(T,y_0).
\end{equation}

Consider the following equation:
\begin{equation}
\label{model8}
\left\{
\begin{array}{ll}
dy_n = A y_ndt + F(t)y_n dW(t), & D\times(0,T)\setminus\{\tilde{T}\},   \\[2mm]
y_n(0) =y_0,\,y_n(\tilde{T})=y_n(\tilde{T}^-)+Bu_n^*,
\end{array}
\right.
\end{equation}
which admits a solution $y_{n} = y(\cdot;y_0,u_n^*)$.
Let $y=y(\cdot; y_0, u^*)$ satisfies the following system:
\begin{equation}
    \label{model9}
\left\{
\begin{array}{ll}
dy = A ydt + F(t)y dW(t), & D\times(0,T)\setminus\{\tilde{T}\},   \\[2mm]
y(0) =y_0,\,y(\tilde{T})=y(\tilde{T}^-)+Bu^*,
\end{array}
\right.
\end{equation}
According to the assumption ${\bf(S)}$ with \eqref{proposition:TN-exist-lim2}, we have
\begin{equation}\label{79}
y(\cdot; y_0, u_n^*)\rightarrow y(\cdot; y_0, u^*)~\text{strongly in}~C([0,T];H), \,\bP-a.s.
\end{equation}
This, together with the second inequality of \eqref{proposition:TN-exist-eq2}, implies that
\begin{equation}\label{74}
\E\|y(T;y_0,u^*)\|_H^2\leq\epsilon\E\|y_0\|_H^2,
\end{equation}
which shows that $u^*$ is also an  admissible control. Then we have
$$
N(T,y_0)\leq \E\|u^*\|_U^2.
$$
This together with \eqref{proposition:TN-exist-lim28} and \eqref{74} implies $u^*$ is a optimal control to problem ${\bf (NP)_{y_0}^T}$.

Next, we prove the uniqueness. By contradiction, assume there are two different norm optimal controls $u_1^*,u_2^*\in L^2_{\cF_{T}}(\Omega;U)$ such that the solution of \eqref{eq:main-for} satisfies
$$
\E\|y(T;y_0,u_1^*)\|_H^2\leq\epsilon\E\|y_0\|_H^2~\text{and}~\E\|y(T;y_0,u_2^*)\|_H^2\leq\epsilon\E\|y_0\|_H^2.
$$
Then $w = (u_1^* + u_2^*)/2\in L^2_{\cF_{T}}(\Omega;U)$ is also a norm optimal control  such that the solution of \eqref{eq:main-for} satisfies
$$
\E\|y(T;y_0,w)\|_H^2\leq\epsilon\E\|y_0\|_H^2.
$$
Since
$$
u_1^*\neq u_2^*~\text{for}~a.e.x\in U,
$$
which along with the Hilbert space $L^2_{\cF_{T}}(\Omega;U)$ is a strictly convex space (see \cite[page 34]{lv1}) (i.e., let $Y$ is a strictly convex space and $u,v\in Y$, if $\|u\|_Y=\|v\|_Y=1$ and $u\neq v$ for $a.e.x\in Y$, it shows that $\|u+v\|_Y<2$), it implies that $$\E\|w\|_U^2<N(T,y_0),$$ which contradicts with \eqref{N}.  The proof is completed.
\end{proof}

\section{Time optimal control problem}
In this section, we shall show that the time optimal control problem ${\bf (TP)}$ has a solution.

We start with the existence of optimal controls to the problem ${\bf(TP)}$ for any $\tilde{T}>0$, $M>0$ and any
$\epsilon> 0$.

\begin{lemma}\label{lemma-9}
Suppose the assumptions ${\bf(H)}$, ${\bf(B)}$ and ${\bf(S)}$ hold. Let $\tilde{T}>0$, $M>0$ and $\epsilon> 0$. Then the problem ${\bf(TP)}$ has at lease one optimal control.
\end{lemma}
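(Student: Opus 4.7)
The strategy is the direct method of the calculus of variations: build a time-minimizing sequence, extract a weakly convergent subsequence of controls using boundedness in $\mathcal{U}_M$, and invoke assumption $\mathbf{(S)}$ together with a triangle-inequality split to pass to the limit in the target-set constraint. By Theorem~\ref{thm:main} and the earlier remark on non-emptiness, admissible pairs $(T,u)$ with $u\in\mathcal{U}_M$ and $y(T;y_0,u)\in B_\epsilon(0)$ exist, so $T^*\in[\tilde{T},+\infty)$; I would pick a minimizing sequence $\{(T_n,u_n)\}_{n\geq 1}$ with $T_n\downarrow T^*$ and $y(T_n;y_0,u_n)\in B_\epsilon(0)$ for each $n$. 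Since $\|u_n\|_{L^2_{\cF_T}(\Omega;U)}\leq M$ in the Hilbert space $L^2_{\cF_T}(\Omega;U)$, after extracting a subsequence (still denoted $\{u_n\}$) one has $u_n\rightharpoonup u^*$ weakly. Weak lower semicontinuity of the norm gives $u^*\in\mathcal{U}_M$, and since each $u_n$ is $\cF_{\tilde{T}}$-measurable and that subspace is closed (hence weakly closed), $u^*$ is also $\cF_{\tilde{T}}$-measurable, so it is a genuine impulse control.

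Next, fix any $T_0>T^*$ with $T_n\leq T_0$ for all large $n$, and extend the state equation past $T^*$ (the system evolves freely after the single impulse at $\tilde{T}$). Assumption $\mathbf{(S)}$ on the horizon $[0,T_0]$ then yields
$$
y_n:=y(\cdot;y_0,u_n)\;\longrightarrow\;y^*:=y(\cdot;y_0,u^*)\quad\text{strongly in}\;C([0,T_0];H),\;\bP\text{-a.s.}
$$
To transfer the target constraint $\E\|y_n(T_n)\|_H^2\leq\epsilon\,\E\|y_0\|_H^2$ to the limit, I decompose
$$
\|y_n(T_n)-y^*(T^*)\|_H \;\leq\; \sup_{t\in[0,T_0]}\|y_n(t)-y^*(t)\|_H \;+\; \|y^*(T_n)-y^*(T^*)\|_H,
$$
where the first term vanishes $\bP$-a.s.\ by the strong convergence above and the second by continuity of the sample paths of $y^*$ together with $T_n\to T^*$. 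Hence $y_n(T_n)\to y^*(T^*)$ in $H$, $\bP$-a.s., and Fatou's lemma gives
$$
\E\|y^*(T^*)\|_H^2 \;\leq\; \liminf_{n\to\infty}\E\|y_n(T_n)\|_H^2 \;\leq\; \epsilon\,\E\|y_0\|_H^2,
$$
so $y^*(T^*)\in B_\epsilon(0)$. Combined with $u^*\in\mathcal{U}_M$ this shows that $u^*$ is an optimal control for $\mathbf{(TP)}$.

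The main difficulty is the passage to the limit with \emph{moving} evaluation times $T_n\downarrow T^*$: the convergence $u_n\rightharpoonup u^*$ is only weak, while the target inequality is enforced at a time that itself varies with $n$. The triangle-inequality decomposition reduces this to a fixed-time convergence (delivered by $\mathbf{(S)}$) plus a small path-increment bound (delivered by sample-path continuity of $y^*$). Without $\mathbf{(S)}$ supplying strong, almost-sure uniform convergence on $[0,T_0]$, the two moving quantities could not be reconciled, so $\mathbf{(S)}$ is the structural hypothesis on which the existence argument really hinges.
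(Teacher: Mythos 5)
Your proof is correct and follows essentially the same route as the paper's: a minimizing sequence $(T_n,u_n)$, weak sequential compactness of the bounded set $\mathcal{U}_M$, assumption $\mathbf{(S)}$ to upgrade weak convergence of controls to $\bP$-a.s.\ strong convergence of trajectories in $C([0,T_0];H)$, and passage to the limit in the target constraint. In fact you are more careful than the paper at the two points it glosses over -- the triangle-inequality treatment of the moving evaluation times $T_n\to T^*$ and the use of Fatou's lemma to convert $\bP$-a.s.\ convergence into the inequality $\E\|y^*(T^*)\|_H^2\le\epsilon\,\E\|y_0\|_H^2$ -- so no changes are needed.
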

\begin{proof}~From Theorem \ref{thm:main}, we see that there exists an admissible control to the problem ${\bf(TP)}$.

We next claim that
\begin{equation}\label{Proof-3}
{\bf(TP)}\;\;\mbox{has at least one optimal control}.
\end{equation}

For this purpose, on one hand, according to (\ref{impulse-2(1)}), there exist  sequences
$\{T_n\}_{n\geq 1}\subseteq (\tilde{T},+\infty)$  and
$\{u_n\}_{n\geq 1}\subseteq \mathcal{U}_M$ so that
\begin{equation}\label{Proof-4}
T_n\rightarrow T^*\in (\tilde{T},+\infty)\;\;\mbox{and}\;\;\{y(T_n; y_0, u_n)\}_{n\geq 1}\subseteq B_\epsilon(0).
\end{equation}
On the other hand, since $\{u_n\}_{n\geq 1}\subseteq \mathcal{U}_M$, there exists a control
$\widetilde{u}\in \mathcal{U}_M$ and a subsequence of $\{u_n\}_{n\geq 1}$, still denoted in the same manner,
so that
\begin{equation}\label{Proof-5}
u_n\rightarrow \widetilde{u}\;\;\mbox{weakly in}\;\;L^2_{\cF_T}(\Omega;L^2(D)),
\end{equation}
which, along with the first relation of (\ref{Proof-4}) and according to the assumption ${\bf(S)}$ with \eqref{Proof-5}, similar to \eqref{79}, we can take the limit for $n\rightarrow +\infty$ to obtain that
\begin{equation*}
y(T_n; y_0, u_n)\rightarrow y(T^*; y_0, \widetilde{u})~\text{strongly in}~C([0,T];H), \,\bP-a.s.
\end{equation*}
This, along with the second relation of (\ref{Proof-4}), implies that
\begin{equation*}
y(T^*; y_0, \widetilde{u})\in B_\epsilon(0).
\end{equation*}
Hence, $\widetilde{u}$ is an optimal control
to ${\bf(TP)}$, i.e., (\ref{Proof-3}) follows. In summary, we finish the proof of this lemma.
\end{proof}
We now turn to the proof of Theorem~\ref{impulse-4}.

\begin{proof}[Proof of Theorem~\ref{impulse-4}] We divide the proof into three Steps.

Step 1. Let $\tilde{T}>0$, $M>0$ and $\epsilon> 0$.
According to Lemma~\ref{lemma-9}, we see that
$T^*>\tilde{T}$ and ${\bf (TP)}$ has an optimal control
$u^*$.

Step 2. We prove that any  optimal control $u^*$ satisfies \eqref{tu}.

 Arbitrarily fix $u_1,u_2\in\cU_{M}$, and $\alpha\in[0,1]$. Let
  $$
  \left\{
  \begin{array}{ll}
  u_\alpha\triangleq(1-\alpha)u_1+\alpha u_2;\\[3mm]
  y_1(t)\triangleq y(t; y_0,u_1),\,\,t\in[0,T];\\[3mm]
  y_2(t)\triangleq y(t; y_0,u_2),\,\,t\in[0,T];\\[3mm]
  y_\alpha(t)\triangleq y(t; y_0,u_\alpha),\,\,t\in[0,T].
  \end{array}
  \right.
  $$
  One can easily check that
  $$
  \begin{array}{lll}
  y_\alpha(T)=y_1(T)+y(T;0,\alpha(u_2-u_1)).
  \end{array}
  $$
  Then, after calculation, we have
  \begin{equation}
  \label{lemma-OP-eq6}
  \begin{array}{lll}
  &\E\|y_\alpha(T)\|_H^2-\E\|y_1(T)\|_H^2\\[2mm]
  =&\E\|y(T;0,\alpha(u_2-u_1))\|_H^2 +2\E\big\langle y(T;0,\alpha(u_2-u_1)),y_1(T)\big\rangle_H.
  \end{array}
  \end{equation}

Let $u^*$ be an optimal control to ${\bf (TP)}$. Arbitrarily take $u\in \cU_{M}$. Then choosing $u_1=u^*$, $u_2=u$ in (\ref{lemma-OP-eq6}), we obtain that for all $u\in \cU_{M}$,
  $$
  \begin{array}{lll}
  &&\alpha^2\E\|y(T;0,u-u^*)\|_H^2   +2\alpha\E\big\langle y(T;0,u-u^*),y(T; y_0,u^*)\big\rangle_H\geq0,
  \end{array}
  $$
  which, implies that for all $u\in\cU_{M}$
\begin{equation}
\label{lemma-OP-eq99}
\E\langle y(T;0,u-u^*),y(T; y_0,u^*)\rangle_H\geq0.
\end{equation}
For system \eqref{eq:main-for2} (with $y_0=0$, $u^*$ replaced by $u-u^*$), system \eqref{eq:main} (with $\eta=y(T; y_0,u^*)\neq0$) and applying It\^{o}'s formula, we obtain
\begin{equation}\label{32}
\E \left\langle u-u^*, B^* z(\tilde{T})\right\rangle_U
=-\E\langle y(T;0,u-u^*),y(T; y_0,u^*)\rangle_H.
\end{equation}

This, along with (\ref{lemma-OP-eq99}) stands that
$$
\E \left\langle u-u^*, B^* z(\tilde{T})\right\rangle_U\leq0, \text{for each}~ u\in\cU_{M},
$$
  which implies
\begin{equation}\label{33}
\E\langle B^* z(\tilde{T}),u^* \rangle_U =\max_{u\in \cU_M} \E\langle B^* z(\tilde{T}),u \rangle_U .
\end{equation}
This shows that
$$
\E\langle B^* z(\tilde{T}),u^* \rangle_U \leq M\left(\E\|B^* z(\tilde{T})\|_U^2\right)^{\frac{1}{2}}.
$$
Noting that $\eta\neq0$, by the claim \eqref{claim}, we can denote
\begin{equation}\label{u0}
u_0=M\frac{B^* z(\tilde{T})}{\left(\E\|B^* z(\tilde{T})\|_U^2\right)^{\frac{1}{2}}}.
\end{equation}
It implies $u_0\in\cU_M$. Also, we have
$$
\begin{array}{lll}
\E\langle B^*z(\tilde{T}),u_0 \rangle_U&=&\E\left\langle B^* z(\tilde{T}),M\frac{B^* z(\tilde{T})}{\left(\E\|B^* z(\tilde{T})\|_U^2\right)^{\frac{1}{2}}}\right\rangle_U\\
&=&M\left(\E\|B^* z(\tilde{T})\|_U^2\right)^{\frac{1}{2}},
\end{array}
$$
which, along with (\ref{33}), it holds
$$
\E\langle B^* z(\tilde{T}),u^* \rangle_U=M\left(\E\|B^* z(\tilde{T})\|_U^2\right)^{\frac{1}{2}}.
$$
Therefore, we have
$$
u^*=M\frac{B^* z(\tilde{T})}{\left(\E\|B^*z(\tilde{T})\|_U^2\right)^{\frac{1}{2}}}.
$$
This implies \eqref{tu}.

Step 3. We prove the uniqueness.

Suppose that $v^*$ is also an optimal control to ${\bf (TP)}$.
 It is clear that
$(u^*+v^*)/2$ is an optimal control to ${\bf (TP)}$. According to \eqref{tu},
\begin{equation*}
\E\|u^*\|_U^2=\E\|v^*\|_U^2=\E\|(u^*+v^*)/2\|_U^2=M^2.
\end{equation*}
These, together with the parallelogram rule, yield that
\begin{equation*}
\E\|u^*-v^*\|_U^2=2\big(\E\|u^*\|_U^2
+\E\|v^*\|_U^2\big)-\E\|u^*+v^*\|_U^2=0.
\end{equation*}
Hence, $u^*=v^*$. In summary, we finish the proof of Theorem~\ref{impulse-4}.
\end{proof}

\section{Examples}
In this Section, we present some specific equations which are covered by \eqref{eq:main-for} and satisfy the assumption ${\bf (H)}$, ${\bf (B)}$ and ${\bf (S)}$. 

Let $D$ be a bounded domain of $\R^d$, $d\geq1$ with boundary $\partial D$ of class $C^2$ and $G\subset D$ be a measurable and nonempty subset with positive measure and denote by $\chi_G$ the characteristic function of $G$. Throughout this Section, we denote by $\langle\cdot,\cdot\rangle$ the scalar product in $L^2(D)$ and denote by $\|\cdot\|$ the norm induced by $\langle\cdot,\cdot\rangle$.

{\bf Example 7.1.} Let $H=U:=L^2(D)$ and the Laplacian with $A=-\Delta$ with $D(A) =H^2(D)\cap H^1_0(D)$. We defined $B:U\rightarrow H$ in the following manner:
$$
Bu:=\chi_Gu,~\text{for all}~u\in L^2_{\cF_T}(\Omega;U).
$$
Then the assumption ${\bf (H)}$ holds with $\gamma=\frac{1}{2}$; see \cite{apraiz2014observability}. Obviously, the assumption ${\bf (B)}$ stands. The corresponding evolution equation \eqref{eq:main-for} is as follows:
$$
\left\{
\begin{array}{ll}
dy = -\Delta ydt + F(t)y dW(t), & D\times(0,T)\setminus\{\tilde{T}\},   \\[2mm]
y(0) =y_0,\,y(\tilde{T})=y(\tilde{T}^-)+\chi_Gu.
\end{array}
\right.
$$  
It is well known that $y\in L^2_{\mathbb{F}}(\Omega;C([0,T];L^2(D))) \cap L^2_{\mathbb{F}}(0,T;H_0^1(D)))$.

Next, we show that in this example, the assumption ${\bf (S)}$ holds. 

Indeed, consider the following equation:
\begin{equation}
\label{model28}
\left\{
\begin{array}{ll}
dy_n = -\Delta y_ndt + F(t)y_n dW(t), & D\times(0,T)\setminus\{\tilde{T}\},   \\[2mm]
y_n(0) =y_0,\,y_n(\tilde{T})=y_n(\tilde{T}^-)+\chi_Gu_n,
\end{array}
\right.
\end{equation}
which admits a solution $y_{n} = y(\cdot;y_0,u_n)$, and there is a $u\in L^2_{\cF_{T}}(\Omega;L^2(D))$ such that
\begin{equation}
\label{proposition:TN-exist-lim22}
u_n\rightarrow u\quad\ \text{weakly in}~\,L^2_{\cF_{T}}(\Omega;L^2(D))\quad\ \text{as}~\,n\rightarrow\infty.
\end{equation} 
Let $y=y(\cdot; y_0, u)$ satisfies the following system:
\begin{equation}
    \label{model29}
\left\{
\begin{array}{ll}
dy = -\Delta ydt + F(t)y dW(t), & D\times(0,T)\setminus\{\tilde{T}\},   \\[2mm]
y(0) =y_0,\,y(\tilde{T})=y(\tilde{T}^-)+\chi_Gu,
\end{array}
\right.
\end{equation}
Denote $\bar{y}_n=y_n-y$. By equation \eqref{model28} and equation \eqref{model29}, we have
\begin{equation}
    \label{model102}
\left\{
\begin{array}{ll}
d\bar{y}_n = -\Delta \bar{y}_ndt + F(t)\bar{y}_n dW(t), & D\times(0,T)\setminus\{\tilde{T}\},   \\[2mm]
\bar{y}_n(0) =0,\,\bar{y}_n(\tilde{T})=\bar{y}_n(\tilde{T}^-)+\chi_G(u_n-u),
\end{array}
\right.
\end{equation}
Similar to \cite[(9.8) in page 276]{lv1}, for a positive constant $C$ independent of $n$, we have the energy estimate of the solution:
\begin{equation}
\label{ineq-y-2}
\|\bar{y}_n\|_{L^2_{\mathbb{F}}(\Omega;C([0,T];L^2(D))) \cap L^2_\mathbb{F}(0,T;H_0^1(D))}
\leq C\E\|\chi_G(u_n-u)\|^2,
\end{equation}
which, along with \eqref{proposition:TN-exist-lim22} implies $\bar{y}_n$ is bounded in $L^2_{\mathbb{F}}(\Omega;C([0,T];L^2(D))) \cap L^2_\mathbb{F}(0,T;H_0^1(D))$. Then there exists a $\bar{y}\in L^2_{\mathbb{F}}(\Omega;C([0,T];L^2(D))) \cap L^2_\mathbb{F}(0,T;H_0^1(D))$ such that
\begin{equation}\label{882}
 \bar{y}_n\rightarrow \bar{y}~\text{weakly in}~  L^2_{\mathbb{F}}(\Omega;C([0,T];L^2(D))) \cap L^2_\mathbb{F}(0,T;H_0^1(D)) ~\text{as}~n\rightarrow\infty.
\end{equation}
This together with the embedding $H_0^1(D)\hookrightarrow L^2(D)$ is compact; see \cite[Section 5.7 Theorem 1]{evans2022partial}, implies that there exists a subsequence of $\{\bar{y}_n\}_{n\geq1}$ denoted itself converging in $L^2(D)$ for $a.e.\,t\in(0,T)$, $\bP-a.s$., to some limit $\bar{y}(t)$:
$$
\lim_{n\rightarrow\infty}\|\bar{y}_n(t)-\bar{y}(t)\|=0, ~\text{for a.e.}~t\in(0,T),\,\bP-a.s.,
$$
which implies
\begin{equation}
    \label{892}
    \bar{y}_n\rightarrow \bar{y}~\text{strongly in}~C([0,T];L^2(D)), \,\bP-a.s.
\end{equation}
Passing to the limit for $n\rightarrow\infty$ in \eqref{model102}, by \eqref{proposition:TN-exist-lim22} and \eqref{882}, we obtain
\begin{equation}
    \label{model112}
\left\{
\begin{array}{ll}
d\bar{y} = -\Delta \bar{y}dt + F(t)\bar{y} dW(t), & D\times(0,T)\setminus\{\tilde{T}\},   \\[2mm]
\bar{y}(0) =0,\,\bar{y}(\tilde{T})=\bar{y}(\tilde{T}^-),
\end{array}
\right.
\end{equation}
Thus by the uniqueness of the solution of \eqref{model112}, we have $\bar{y}=0$ in $D$ $\bP$-a.s. This along with \eqref{892}, implies that $\bar{y}_n\rightarrow0$ strongly in $C([0,T];L^2(D))$, $\bP$-a.s.,  i.e.,
$$
y(\cdot; y_0, u_n)\rightarrow y(\cdot; y_0, u)~\text{strongly in}~C([0,T];L^2(D)), \,\bP-a.s.,
$$ 
i.e., the assumption ${\bf (S)}$ holds. 

{\bf Example 7.2.} Let $D:=(0,1)$ and $H=U:=L^2(D)$ and the one-dimensional degenerate operator with $A=-\partial_x(x^\alpha\partial_x)$ with $\alpha\in(0,2)$, and $\mathcal{D}(A):=\{v\in H_\alpha^1(D):(x^{\alpha}v_x)_x\in L^2(D) ~\text{and}~ BC_\alpha(v) =0 \}$, where
$$
\begin{array}{ll}
H^1_\alpha(D):=\bigg\{v\in L^2(D):& v \text{ is absolutely continuous in}~ D,
 x^{\frac{\alpha}{2}}v_x\in L^2(D)\\
 \quad & \text{and}~\,v(1)=0 \bigg\},
\end{array}
$$
and
\begin{equation*}
BC_\alpha(v)=
\begin{cases}
v_{|_{x=0}}, &\alpha\in(0,1),\\
(x^{\alpha}v_x)_{|_{x=0}}, &\alpha\in[1,2),
\end{cases}
\end{equation*}
endowed with the norms
$
\|v\|^2_{H^1_\alpha(D)}:=\|v\|^2+\|\sqrt{x^\alpha}v_x\|^2.
$
 
 We defined $B:U\rightarrow H$ in the following manner:
$$
Bu:=\chi_Gu,~\text{for all}~u\in L^2_{\cF_T}(\Omega;U).
$$
Then the assumption ${\bf (H)}$ holds with $\gamma\in(0,1)$ is defined as follows; see \cite{liu2023}.
$$
\gamma=
\left\{
\begin{array}{lll}
\dfrac{3}{4}, &~\text{if}~ \alpha\in(0,2)\setminus\{1\},\\[3mm]
\dfrac{3}{2\sigma}  ~\text{for any}~\sigma\in(0,2),&~\text{if}~ \alpha=1.
\end{array}
\right.
$$ 
Obviously, the assumption ${\bf (B)}$ stands. Similar to the discussion in example 7.1 and along with the embedding $H_\alpha^1(D)\hookrightarrow L^2(D)$ is compact; see \cite{alabau2006carleman}, the assumption ${\bf (S)}$ is valid for this case.

\section{Declarations}
The authors have not disclosed any competing interests.

\bibliographystyle{abbrvnat}
\bibliography{ref.bib}

\end{document}